\documentclass[11pt,a4paper]{amsart}
\usepackage[T1]{fontenc}
\usepackage[utf8]{inputenc}
\usepackage{lmodern}
\usepackage{amsmath,amssymb,amsthm}
\usepackage[margin=2cm]{geometry}
\usepackage{graphicx}
\usepackage{microtype}
\usepackage{biblatex}
\usepackage{hyperref}
\usepackage{xcolor}
\hypersetup{colorlinks=false,pdfborder={0 0 0},
  pdftitle={Closed geodesics on compact semi-Riemannian manifolds},
  pdfauthor={Souheib Allout and Salah Chaib}}
\newtheorem{theorem}{Theorem}[section]
\newtheorem{lemma}[theorem]{Lemma}
\newtheorem{proposition}[theorem]{Proposition}
\newtheorem{corollary}[theorem]{Corollary}
\newtheorem{problem}[theorem]{Problem}
\theoremstyle{remark}
\newtheorem{remark}[theorem]{Remark}
\newtheorem{example}[theorem]{Example}
\numberwithin{equation}{section}
\newcommand{\R}{\mathbb R}
\newcommand{\Z}{\mathbb Z}
\newcommand{\Q}{\mathbb Q}
\newcommand{\T}{\mathbb T}
\newcommand{\g}{\mathfrak g}
\DeclareMathOperator{\Sol}{Sol}
\DeclareMathOperator{\Euc}{Euc}
\DeclareMathOperator{\Heis}{Heis}

\DeclareMathOperator{\ad}{ad}

\DeclareMathOperator{\diag}{diag}
\DeclareMathOperator{\Span}{span}
\DeclareMathOperator{\tr}{tr}

\title[Closed geodesics on compact manifolds]{Closed geodesics on compact semi-Riemannian manifolds}
\author[S. Allout]{Souheib Allout}
\address{Souheib Allout, Faculty of Mathematics, USTHB university, Algiers, Algeria}
\email{souheib.allout.math@gmail.com}
\author[S. Chaib]{Salah Chaib}
\address{Salah Chaib, Centro de Matem\'atica, Universidade do Minho, Campus de Gualtar, 4710-057 Braga, Portugal}
\email{salah.chaib@cmat.uminho.pt}
\date{October 12, 2026}
\subjclass[2020]{Primary 53C22; Secondary 53C50, 22E40}

\begin{document}
\begin{abstract}
We study the existence of closed geodesics on compact quotients of Lie groups endowed
with left-invariant pseudo-Riemannian metrics. 
We construct a compact, geodesically complete pseudo-Riemannian manifold that admits no closed geodesics. 
We also establish the existence of closed geodesics in several settings, with particular focus on the Lorentzian case.
\end{abstract}
\maketitle

\section{Introduction}
Closed geodesics on compact Riemannian manifolds form a classical subject in differential geometry.
A foundational theorem of Lyusternik and Fet \cite{LyusternikFet1951} asserts that every compact Riemannian manifold admits a (nonconstant) closed geodesic. 
Later, Gromoll and Meyer \cite{GromollMeyer1969} related the multiplicity of closed geodesics to the topology of the free loop space, proving in particular that sufficiently rich loop space topology forces infinitely many geometrically distinct closed geodesics.

The pseudo-Riemannian situation is substantially more delicate. 
The energy functional on the free loop space is no longer bounded from below, and the standard Riemannian variational methods do not apply.
In Lorentzian geometry, several existence and nonexistence results are known.
Tipler established an early existence theorem for closed timelike geodesics \cite{Tipler1979} and later generalized by Galloway \cite{Galloway1984}.
Galloway also constructed an incomplete Lorentzian metric on $\T^3$ without closed non-spacelike geodesics \cite{galloway1986compact}.
S\'anchez \cite{Sanchez2005static} showed that every compact static spacetime admits a closed timelike geodesic.
This result was generalized in \cite{flores2011periodic} to compact Lorentzian manifolds admitting a Killing vector field that is timelike somewhere. 
In dimension two, Suhr \cite{suhr2013closed} proved that every compact Lorentzian surface admits at least two geometrically distinct closed geodesics and that this bound is optimal.
Nevertheless, the existence of closed geodesics on compact pseudo-Riemannian manifolds remains poorly understood.

Recently, the authors of \cite{AlloutBelkacemZeghib2024closed} constructed compact Lorentzian manifolds admitting no closed geodesics. 
These examples, however, are geodesically incomplete. We aim to show that geodesic completeness does not restore the existence theorem in higher signature.

In this paper, we focus on compact quotients $\Gamma\backslash G$, where $G$ is a Lie group endowed with a complete left-invariant pseudo-Riemannian metric and $\Gamma$ is a cocompact lattice.
Our main result is the following.
\begin{theorem}\label{thm:main}
There is a Lie group $G$ equipped with a geodesically complete left-invariant pseudo-Riemannian metric $m$ of signature $(2,4)$ and a cocompact lattice $\Gamma\subset G$ such that $(\Gamma\backslash G,m)$ has no closed geodesic.
\end{theorem}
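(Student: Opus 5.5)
We will build $(G,m)$ from a left-invariant metric on a solvable (or semidirect product) group. Closed geodesics on $\Gamma\backslash G$ correspond to geodesics $\gamma$ in $G$ with $\gamma(t+T)=\delta\gamma(t)$ for some $\delta\in\Gamma$. For a left-invariant metric, geodesics $\gamma$ with $\gamma(0)=e$ are governed by the Euler--Arnold equation on $\g$:
\[
\dot x=\ad_x^* x, \qquad x(t)=\gamma(t)^{-1}\dot\gamma(t)\in\g,
\]
where $\ad^*$ is the $m$-adjoint. A closed geodesic through $\Gamma g$ lifts, after left translation, to a geodesic from $e$ with $\gamma(T)=g^{-1}\delta g$ and $x(T)=\mathrm{Ad}_{g^{-1}\delta^{-1}g}\,x(0)$-compatible periodicity. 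The plan is to choose $\g$ and $m$ so that the Euler--Arnold flow has a conserved quantity that drifts. Concretely, some linear functional $\ell$, or a quadratic quantity built from $m$ and the structure constants, should be strictly monotone along every nonconstant solution. At the same time this quantity must be invariant, or transform controllably, under $\mathrm{Ad}_\Gamma$. This rules out the periodicity condition.

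\emph{Choice of group.} Take $G=\R\ltimes_\phi N$ or $G=(\R^2\ltimes\R^4)$-type, of dimension 6, with a lattice. For instance, take $G=\Sol$-like, $\R^2\ltimes_A\R^4$, with $A$ hyperbolic and integral so that a lattice exists by the standard criterion. We will put on $\g$ a neutral-type metric of signature $(2,4)$ for which the derived algebra $\mathfrak n$ is degenerate in a controlled way. The key device will be an $m$-null direction $Z$, central or in $\mathfrak n$, such that $\langle x(t),Z\rangle$ satisfies
\[
\tfrac{d}{dt}\langle x,Z\rangle=\langle x,[x,Z]\rangle ,
\]
and this equals a sum of squares of some coordinates of $x$. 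That makes $\langle x,Z\rangle$ nondecreasing, and strictly increasing unless $x$ lies in a subalgebra $\mathfrak h$. Geodesics tangent to $\mathfrak h$ will be handled separately, e.g.\ $\mathfrak h$ abelian and transverse to the lattice directions, so that their orbits are lines that never close up in $\Gamma\backslash G$.

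\emph{Steps.} (1) Write down $\g$, the metric $m$, and verify the existence of a cocompact lattice. (2) Prove completeness. Since $G$ is solvable and the Euler--Arnold equation becomes triangular, one can integrate successively: the $\R^2$-components are conserved or linear, and the $\R^4$ part solves a linear ODE with explicit coefficients. Hence solutions exist for all time. This also matches known completeness criteria for left-invariant metrics with suitable unimodularity or nilpotent structure. (3) Derive the monotone quantity and show it is $\mathrm{Ad}_\Gamma$-compatible with the closing condition. Since $\Gamma$ acts by unimodular, finite-order-free hyperbolic automorphisms preserving the relevant null line only up to scaling, we must show that the scaling is incompatible with strict monotonicity plus the orbit equation. (4) Treat the degenerate case $x\in\mathfrak h$, together with constant solutions: their exponential curves stay in cosets that are non-closed, for example by choosing the eigenvalues of $A$ so that $\exp(\mathfrak h)\cap g^{-1}\Gamma g$ is trivial.

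\emph{Main obstacle.} The hardest part is step (3), reconciling the monotone quantity with the translation by $\delta$. The closing condition only requires $x(T)=\mathrm{Ad}_{h}x(0)$ for $h=g^{-1}\delta^{-1}g$, and $\mathrm{Ad}_h$ can rescale $Z$. I would first try to arrange that $Z$ is central in $\g$, so that $\mathrm{Ad}_h Z=Z$, while $m$ is chosen non-bi-invariant so that $[x,\cdot]^*$ still produces the positive quadratic term. If centrality conflicts with the lattice, I would instead make $Z$ an eigenvector with $\mathrm{Ad}_h Z=\lambda Z$. I would then use the monotone quantity together with a second conserved quantity, such as the energy $m(x,x)$, to exclude $\lambda\neq 1$.
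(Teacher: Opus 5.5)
Your proposal has a genuine gap: its central step rests on a misreading of the closing condition, and the construction itself is never carried out.

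\textbf{The closing condition.} Since $\Gamma$ acts on the left and $m$ is left-invariant, a lift with $\tilde\alpha(t+T)=\delta\tilde\alpha(t)$ satisfies $\tilde\alpha(t+T)^{-1}\dot{\tilde\alpha}(t+T)=\tilde\alpha(t)^{-1}\delta^{-1}\delta\,\dot{\tilde\alpha}(t)=\tilde\alpha(t)^{-1}\dot{\tilde\alpha}(t)$. So the Euler--Arnold solution is exactly $T$-periodic, with no $\mathrm{Ad}_h$-twist; this is Lemma~\ref{holonomy-lemma}. Your ``main obstacle'' in step (3) is therefore illusory. A quantity strictly monotone along nonconstant solutions rules out nonconstant periodic solutions outright. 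The problem then reduces to showing that no conjugate of $\Gamma$ meets $\exp$ of the nonzero singular set of $V_m$.

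\textbf{The central-$Z$ remedy.} This would destroy the construction. If $Z$ is central, then $\frac{d}{dt}\langle x,Z\rangle=\langle x,[x,Z]\rangle\equiv 0$, so there is no drift at all. Moreover, by Corollary~\ref{cor:center}, any $G$ with $\mathfrak z(\g)\neq 0$ has closed geodesics on every compact quotient. Your drift mechanism is the right one, but the paper realizes it with non-central, non-null directions: $Z=h$ in $\mathfrak{sol}$ gives $\dot x=y^2$ for $q_2$, and $Z=f_3$ in $\mathfrak{euc}$ gives $\dot z=x^2$ for $q_1$.

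\textbf{Missing construction.} The proposal never fixes a group, metric or lattice, and the steps carrying the real difficulty are unsupported.
\begin{itemize}
\item Completeness does not follow from solvability. The Euler--Arnold field is quadratic, and the coordinate you need to drift is precisely a nonlinear one, contrary to your claim that the $\R^2$-components are conserved or linear. Left-invariant Lorentz metrics on solvable groups can be incomplete: the earlier examples of \cite{AlloutBelkacemZeghib2024closed} were, and choosing a complete metric on the $\Sol$ factor is exactly the new point of the paper. For $q_2$ you can see completeness from the first integral $x^2+y^2$, after which $z$ solves a linear equation.
\item The locus where your monotone quantity stalls can carry nonconstant solutions. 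For example, on $Q=\{x=0\}$ in $\mathfrak{euc}$ one has $\dot y=yz$. So that locus cannot be treated as a union of one-parameter subgroups without further argument.
\item Step (4) needs an actual mechanism forcing the lattice to avoid all conjugates of the singular set. For your hyperbolic $\R^2\ltimes_A\R^4$ you give no argument that a suitable choice of eigenvalues exists.
\end{itemize}

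\textbf{What the paper does.} It takes $G=\Sol\times\Euc$ with $q_1\oplus q_2$. Each factor has only constant periodic solutions, so $\mathcal H(q)=\bigcup_i P_i\setminus\{e\}$ for four abelian planes. It then takes an irreducible lattice $\Z\ltimes_A\Lambda$ with $A=\diag(B,\sigma(B))$, where $B\in\operatorname{SL}(2,\Z[\sqrt2])$ is elliptic and $\sigma(B)$ is hyperbolic. The irrational rotation on $V_1$ forces any conjugate of $\Gamma$ meeting $P_1$ or $P_3$ to lie in $\Lambda$. The combination of rotation and contraction forces $\Lambda\cap\exp(\Span(f_2,f_3,e_2))=\{e\}$, which handles $P_2$, $P_4$ and their conjugates.
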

The group $G$ and the lattice construction follow the approach of \cite{AlloutBelkacemZeghib2024closed}; the essential improvement is the choice of a complete metric on the Sol factor.

We also show that every compact quotient of a Lie group whose Lie algebra has a nontrivial center admits a closed geodesic.
In Lorentzian signature, we do not know whether geodesic completeness forces the existence of closed geodesics.
\begin{problem}\label{prob:lorentz}
Does every compact geodesically complete Lorentz manifold admit a closed geodesic?
\end{problem}
Motivated by this question, we provide examples that have only null or spacelike closed geodesics (see Subsec. \ref{subsec:rem_ex}).

\subsection{Existence of closed geodesics in natural classes}
\subsubsection{Constant sectional curvature}
Carri\`ere \cite{Carriere1989} proved that compact flat Lorentz manifolds are
geodesically complete. Klingler \cite{Klingler1996} extended this result to all compact Lorentz manifolds of
constant sectional curvature. Consequently, their universal covers are the corresponding complete
simply connected Lorentz space forms. The closed-geodesic problem can therefore be expressed in
terms of the deck group $\Gamma$.

\emph{Zero curvature.} The existence of closed
geodesics on compact flat Lorentz manifolds was shown by
Guediri \cite{Guediri2002}.
Our example in Theorem~\ref{thm:null-intro} exhibits a compact flat Lorentz
three-manifold whose closed geodesics are all lightlike.

\emph{Negative curvature.} The finiteness of levels theorem for compact anti-de Sitter manifolds, established by Zeghib \cite{Zeghib1998AdS} and Kulkarni-Raymond \cite{KulkarniRaymond1985}, implies that every timelike geodesic in a compact $\mathrm{AdS}$-manifold is closed.

\emph{Positive curvature.} The Calabi--Markus phenomenon \cite{CalabiMarkus1962} implies that the de Sitter space has no compact quotient.

\subsubsection{The Brinkmann case}
A Brinkmann manifold is a Lorentz manifold carrying a nonzero parallel
lightlike vector field. 
Mehidi and Zeghib \cite{MehidiZeghib2026brinkmann} prove geodesic completeness in the compact
case. 
They further conjecture that the flow generated by the parallel vector field is equicontinuous, i.e., it has compact closure in the isometry group. 
Whenever this property holds, Proposition~\ref{prop:compact-action} gives a closed geodesic. 
Important cases are

\emph{Cahen--Wallach spaces.} These are simply connected,
indecomposable Lorentzian symmetric spaces of nonconstant sectional
curvature. Their compact quotients are Brinkmann (after a possible
double cover). The parallel flow is periodic \cite[Proposition~8.2]{KathOlbrich2019}. Its orbits are closed null geodesics.

\emph{Locally homogeneous plane waves.} A plane wave is a Brinkmann manifold whose leaves tangent to $V^\perp$ are flat and whose curvature tensor satisfies $\nabla_X R=0$ for every $X\in V^\perp$, where $V$ is the parallel lightlike vector field.
Hanounah, Kath, Mehidi and Zeghib \cite[Theorem~1.11]{HanounahKathMehidiZeghib2025} prove that the parallel flow of every compact locally homogeneous plane wave is equicontinuous, although it need not be periodic.

\subsubsection{Locally symmetric case}
The completeness of closed locally symmetric Lorentz manifolds was recently proved in \cite{AlloutHanounah2025}. Building on this result, the existence of closed geodesics on such manifolds is shown in \cite{AlloutClosedLocallySymmetric}.

\subsection{Remarkable examples}\label{subsec:rem_ex}

\subsubsection{A complete example whose closed geodesics are all spacelike}
Galloway \cite{galloway1986compact} constructed an incomplete Lorentzian metric on $\mathbb{T}^3$ without non-spacelike closed geodesics.
Guediri \cite{guediri2007new} later proposed complete examples with the same property, obtained as compact quotients of the Heisenberg group endowed with a left-invariant Lorentzian metric.
However, these quotients admit closed timelike geodesics, as we show in Theorem~\ref{thm:heis-intro}. 
We provide a geodesically complete example with only spacelike closed geodesics.

\begin{theorem}\label{thm:sol-intro}
Let $\Sol$ denote the identity component of the isometry group of
the Lorentz plane $\mathbb{R}^{1,1}$.
There exists a complete left-invariant metric on $\Sol$ such that for every cocompact lattice $\Gamma$, the quotient $\Gamma\backslash \Sol$ is without non-spacelike closed geodesics
\end{theorem}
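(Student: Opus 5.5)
We work on the Lie algebra $\mathfrak{sol}=\Span\{T,X,Y\}$ with $[T,X]=X$, $[T,Y]=-Y$, $[X,Y]=0$, so $\Sol=\R\ltimes\R^2$ with $T$ acting by $\diag(1,-1)$. A cocompact lattice $\Gamma$ meets the nilradical $N=\R^2$ in a lattice and projects to a lattice $\Z t_0\subset\R$; in particular $\Gamma$ is generated by $\Gamma\cap N$ and one element $\gamma_0=(t_0,v_0)$ with $e^{t_0 T}$ hyperbolic. We choose the Lorentzian metric with $\langle X,Y\rangle=1$, $\langle X,X\rangle=\langle Y,Y\rangle=0$, $\langle T,T\rangle=1$, $T\perp N$; other candidates can be substituted if a step fails.

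The plan has three steps.

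\textbf{Step 1: completeness and conserved quantities.} Write the Euler--Arnold equation $\dot\xi=\ad_\xi^*\xi$ on $\mathfrak{sol}$, with $\xi=aT+bX+cY$. A direct computation gives
\begin{align*}
\dot a &= 2bc,\\
\dot b &= ab,\\
\dot c &= -ac.
\end{align*}
The product $bc$ is conserved, as is the energy $a^2+2bc$. Hence $\dot a=2bc$ is constant, so $a$ is affine and $b,c$ are explicit exponentials of quadratics. This gives completeness, since solutions exist for all time and the metric is left-invariant. It also shows that non-spacelike geodesics have energy $a^2+2bc\le 0$. Since $2bc=E-a^2$ with $E=a^2+2bc\le 0$, this forces $\dot a=2bc=E-a^2<0$, unless $E=0$ and $a=0$.

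\textbf{Step 2: closed geodesics and $a$.} A closed geodesic in $\Gamma\backslash\Sol$ lifts to a geodesic $\sigma$ with $\sigma(s+L)=\gamma\sigma(s)$ and $\dot\sigma(s+L)=\gamma_*\dot\sigma(s)$. By left-invariance the body velocity $\xi(s)=\sigma(s)^{-1}\dot\sigma(s)$ is then periodic: $\xi(s+L)=\xi(s)$. For a non-spacelike geodesic, $a(s)$ is affine with slope $2bc<0$, which cannot be periodic. So only the degenerate case $a\equiv 0$, $bc=0$ with $E=0$ remains, that is, a lightlike geodesic with $\xi=bX$ or $\xi=cY$ and $b$ or $c$ constant. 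These are one-parameter subgroups $s\mapsto \exp(sX)$ (up to left translation), and we must rule them out.

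\textbf{Step 3: the null case (main obstacle).} A left-translated one-parameter subgroup $g\exp(sX)$ closes up if and only if $g^{-1}\gamma g=\exp(LX)$ for some $\gamma\in\Gamma$. This requires $\Gamma\cap N$ to contain a vector conjugate to a multiple of $X$. Conjugation in $\Sol$ acts on $N$ by $\diag(e^t,e^{-t})$, which preserves the eigenlines $\R X$ and $\R Y$. So we need $\Gamma\cap N$ to meet an eigenline of the hyperbolic matrix $e^{t_0T}\in SL_2(\Z)$, in the basis of $\Gamma\cap N$. Its eigenvalues are irrational quadratic units, so the eigenlines have irrational slope and contain no nonzero lattice points. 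Elements of $\Gamma$ outside $N$ have nonzero $T$-component and so are not conjugate into $\exp(\R X)$. This rules out closed null geodesics.

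If the conservation-law computation in Step 1 produces a different sign pattern, so that timelike geodesics do not have strictly monotone $a$, I would instead adjust $\langle T,T\rangle$ or tilt the metric by a $\langle T,X\rangle$ term. The goal is to make $\dot a$ a definite multiple of the energy, which is the key structural feature the argument needs.
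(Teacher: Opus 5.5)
Your Step~1 is where the argument fails, and for the metric you chose the conclusion is actually false. For $q(aT+bX+cY)=a^2+2bc$ one has $q(\xi,[\xi,T])=q(bX+cY,-bX+cY)=bc-bc=0$, $q(\xi,[\xi,X])=ac$ and $q(\xi,[\xi,Y])=-ab$. The Euler--Arnold system is therefore
\[
\dot a=0,\qquad \dot b=-ab,\qquad \dot c=ac .
\]
Your system cannot be right in any case. With $\dot a=2bc$ and $bc$ constant, $a^2+2bc$ is not conserved, whereas conservation of $q(\xi,\xi)$ is automatic since $q(\dot\xi,\xi)=q(\xi,[\xi,\xi])=0$.

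The structural reason is the following. For $\xi=bX+cY\in\mathfrak{n}=\Span\{X,Y\}$, every bracket $[\xi,\eta]$ is a multiple of $\ad_T\xi$. Hence $\xi$ is a zero of $V_q$ if and only if
\[
q(\xi,\ad_T\xi)=b^2q(X,X)-c^2q(Y,Y)
\]
vanishes. Your choice $q(X,X)=q(Y,Y)=0$ makes this vanish identically. So all of $\mathfrak{n}$ is singular, and each $gN$ is a totally geodesic flat $\R^{1,1}$. Since $\Gamma\cap N$ is a lattice and $\{bc<0\}$ is an open cone, there is a timelike $v\in\mathfrak{n}$ with $\exp(v)\in\Gamma$. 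Then $s\mapsto\exp(sv)$ descends to a closed timelike geodesic on every $\Gamma\backslash\Sol$. Your fallback does not help: rescaling $\langle T,T\rangle$ or adding a $\langle T,X\rangle$ term leaves $q|_{\mathfrak{n}}$ unchanged, hence leaves this criterion unchanged.

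The missing idea is to change $q|_{\mathfrak{n}}$ itself, so that $\xi\mapsto q(\xi,\ad_T\xi)$ is semidefinite and vanishes only on a non-timelike eigenline. The paper takes $q_2(xh+ye_1+ze_2)=x^2-y^2+2yz$, so $e_2$ is null and $e_1$ timelike. This gives $\dot x=y^2$, $\dot y=-xy$, $\dot z=x(z-2y)$. Here $\dot x=y^2\ge0$ is not a multiple of the energy, but semidefiniteness is all that is needed. On a periodic solution $y\equiv0$, so $x$ is constant and $z=z(0)e^{xt}$, which forces $xz(0)=0$. The only periodic solutions are then the zeros on $\R h$ (spacelike) and on $\R e_2$ (null).

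Completeness now needs its own argument. The paper cites the literature; it also follows from conservation of $x^2+y^2$, after which $z$ solves a linear equation with bounded coefficients. Completeness genuinely constrains the choice of metric. The definite option $q=a^2+b^2-c^2$ gives $\dot a=-(b^2+c^2)$ and would make the lattice step unnecessary. But along $b=0$ at energy $-1$ it yields $\dot a=-(1+a^2)$, which blows up in finite time.

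With the paper's metric, your Steps~2 and~3 go through as written. In particular, your eigenline argument excluding conjugates of $\exp(\R e_2)$ from $\Gamma$ is exactly the paper's.
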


\subsubsection{A complete flat example whose closed geodesics are all null}
\begin{theorem}\label{thm:null-intro}
There exists a flat left-invariant Lorentzian metric on the Heisenberg group $\mathrm{Heis}_3$ and a cocompact lattice $\Gamma\subset\mathrm{Heis}_3$ such that the closed geodesics on $\Gamma\backslash\mathrm{Heis}_3$ are precisely the fibers of its natural circle fibration over $\mathbb{T}^2$.
All these fibers are null.
\end{theorem}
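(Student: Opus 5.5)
We need a flat left-invariant Lorentzian metric on $\Heis_3$, a lattice, and a classification of its closed geodesics. Let $\mathfrak{heis}_3$ have basis $X,Y,Z$ with $[X,Y]=Z$ and $Z$ central. I would take the Lorentzian metric with $\langle X,X\rangle=1$, $\langle Y,Z\rangle=1$ and all other products of basis vectors zero. In particular $Z$ is null and $Y$ is null.

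The first step is the Koszul formula for left-invariant fields,
\[2\langle\nabla_UV,W\rangle=\langle[U,V],W\rangle-\langle[V,W],U\rangle+\langle[U,W],V\rangle .\]
I expect $\nabla Z=0$, so $Z$ is parallel null and the metric is Brinkmann. I would then compute $\nabla_XX$, $\nabla_XY$, $\nabla_YX$, $\nabla_YY$ and check that the curvature vanishes. If it does not, I would adjust by adding $\langle Y,Y\rangle=\lambda$ or an $X$–$Y$ term; the known classification of flat left-invariant Lorentz metrics on $\Heis_3$ (the one with null center) guarantees that a suitable choice exists. Completeness comes for free: compact quotients of flat Lorentz manifolds are complete by Carrière, or it follows from the Brinkmann completeness of Mehidi–Zeghib. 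The lattice is $\Gamma=\Heis_3(\Z)$. The $Z$-orbits are the fibers of $\Gamma\backslash\Heis_3\to\T^2$. They are geodesics, being integral curves of a parallel field, and they are null and closed.

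The main step is showing that there are no other closed geodesics. A geodesic in a Lie group is $\gamma(t)=g\exp$-type curve with $\gamma'=dL_{\gamma}(u(t))$, where $u$ solves the Euler–Arnold equation $u'=\ad_u^{*}u$; here $\ad^*$ is the metric adjoint, so $\langle\ad_u^*v,w\rangle=\langle v,[u,w]\rangle$. Write $u=aX+bY+cZ$. Conserved quantities come from Killing fields, the right-invariant fields. In particular $\langle u,Z\rangle=b$ is constant because $Z$ is central, and the energy is constant. I would solve the system explicitly. I expect $b$ constant and $a$ affine in $t$, or $a$ constant when $b=0$, with $c$ then polynomial in $t$.

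Next, project to $\T^2=\Heis_3/Z(\Heis_3)\cdot\Gamma$. The projected velocity is $(a(t),b(t))$. If the geodesic is closed, its projection is a closed curve in $\T^2$ whose lift has velocity $(a,b)$ with $b$ constant and $a$ affine. Closing up forces the total displacement to be an integer vector at the period $T$. I would then bring in the central coordinate. Closure in the $Z$-direction requires $\int c\,dt$ plus the Heisenberg area term to lie in $\Z$, and must hold along every iterate of the period. My guess is that unless $a=b=0$, the $Z$-component grows quadratically in the iterates, either because $a$ grows linearly or through a nonzero area term. This contradicts closure, since returning to the same point with the same velocity makes the motion periodic. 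Hence $a=b=0$, then $c$ is constant, and $\gamma$ is a fiber.

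The main obstacle is this last closure argument. I need to pick the metric so that the Euler–Arnold solution makes the drift visibly nonzero whenever $(a,b)\ne0$. The delicate case is $b=0$ with $a$ constant. There the projection is a horizontal line, and I must check that $c$ acquires a linear or quadratic term in $t$ from $\nabla_XX$ that makes the $Z$-displacement incommensurable or unbounded. If needed, I would rescale the metric, for instance $\langle X,X\rangle$ or $\langle Y,Z\rangle=\mu$, by an irrational factor relative to the lattice. This makes the $Z$-shift over a closed horizontal loop irrational, which rules out closure.
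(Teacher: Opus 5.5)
There is a genuine gap, and it is the choice of lattice; the repair you suggest does not address it. Your metric $q(aX+bY+cZ)=a^2+2bc$ is indeed flat, with $Z$ parallel and null. Its Euler--Arnold system is $\dot a=-b^2$, $\dot b=0$, $\dot c=ab$. The zero set of this field is the whole plane $P=Z^\perp=\Span\{X,Z\}$, not only $\R Z$. So $u\equiv X$ is a constant solution and $t\mapsto\exp(tX)$ is a spacelike geodesic. Since $\exp(X)\in\Heis_3(\Z)$ (with $X,Y,Z$ the standard generators), this geodesic descends to a closed geodesic of $\Heis_3(\Z)\backslash\Heis_3$ that is not a fiber. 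The theorem is therefore false for your pair of metric and lattice.

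Your fallback for the case $b=0$ cannot work. First, there is no drift at all there: $\nabla_XX=0$ and $c$ is constant. More fundamentally, closure does not require the holonomy $H(u,T)$ to lie in $\Gamma$, only in some conjugate $g^{-1}\Gamma g$, because the lift may start at any point (Lemma~\ref{holonomy-lemma}). In $\Heis_3$, conjugating an element with nonzero horizontal projection shifts its central coordinate by an arbitrary real number. So the central coordinate never obstructs closing; only the horizontal direction does. Rescaling $\langle X,X\rangle$ or $\langle Y,Z\rangle$ leaves $P$ unchanged, so it does not help either.

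The missing idea is the following. The holonomy locus is $\exp(P)\setminus\{e\}$, and $\exp(P)$ is a normal subgroup containing the center $C$. Hence $g^{-1}\Gamma g\cap\exp(P)=g^{-1}(\Gamma\cap\exp(P))g$, and what you must arrange is $\Gamma\cap\exp(P)=\Gamma\cap C$. Under the projection $\pi$ to $\Heis_3/C\cong\R^2$, this says that the line $\pi(P)$ meets the lattice $\pi(\Gamma)$ only at $0$. That is an irrationality condition on the direction of $\pi(Z^\perp)$, not on any central displacement.

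The paper uses the metric $y^2+2xz$, so that $P=\Span\{Y,Z\}$, together with $\Gamma=\langle\exp(X),\exp(\theta X+Y),\exp(Z)\rangle$ for $\theta\notin\Q$. This $\Gamma$ is the image of $\Heis_3(\Z)$ under an automorphism. In your normalization, $\langle\exp(X+\theta Y),\exp(Y),\exp(Z)\rangle$ works. With such a lattice, the holonomy of any closed geodesic lies in $C\setminus\{e\}$. This forces $u\in\R Z$, so the geodesic is a fiber.

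Your treatment of $b\neq0$ is fine: the left-trivialized velocity of a closed geodesic is periodic, while $\dot a=-b^2<0$. The quadratic-growth argument is therefore unnecessary.

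Finally, your Koszul formula has a sign error: the last term must be $\langle[W,U],V\rangle$. As written, it gives $\nabla_UU=0$ for every left-invariant $U$.
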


\subsubsection{Heisenberg quotients}

\begin{theorem}\label{thm:heis-intro}
Every compact Lorentz quotient of the form $\Gamma\backslash\Heis$ with non-degenerate center admits both closed timelike and spacelike geodesics.
\end{theorem}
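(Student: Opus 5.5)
The plan is to reduce closed geodesics on $\Gamma\backslash\Heis$ to geodesics of $\Heis$ that are translated by an element of $\Gamma$, and to produce them from the center. Let $\mathfrak{h}=\Span\{X,Y,Z\}$ with $[X,Y]=Z$ and $\langle\cdot,\cdot\rangle$ a Lorentzian inner product with $\langle Z,Z\rangle\neq 0$. Since $Z$ is non-degenerate, $Z^\perp$ is a non-degenerate plane complementary to the center. After rescaling and changing basis, I will choose $X,Y$ orthonormal in $Z^\perp$ and $\langle Z,Z\rangle=\epsilon\lambda^2$ with $\epsilon=\pm1$. Then $Z^\perp$ is Euclidean if $Z$ is timelike, and Lorentzian if $Z$ is spacelike.

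\textbf{Step 1: the fiber geodesics.} A left-invariant vector field $W$ has geodesic integral curves iff $\langle [W,U],W\rangle=0$ for all $U$. For $W=Z$ this holds because $Z$ is central, so $t\mapsto \exp(tZ)$ is a geodesic, and so is every left translate $g\exp(tZ)$. The group $\Gamma\cap Z(\Heis)$ is a lattice in the center, so there is some $\exp(T Z)\in\Gamma$. Hence every central fiber closes up in the quotient, giving a closed geodesic of the causal type of $Z$. This yields a timelike closed geodesic if $Z$ is timelike, and a spacelike one otherwise.

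\textbf{Step 2: the other causal type.} I will look for closed geodesics in the direction of $\gamma\in\Gamma$ whose projection $\bar\gamma$ to $\Heis/Z\cong\R^2$ is nonzero. The standard fact is that $g\exp(tW)$ projects to a closed curve precisely when $\exp(W)$ is conjugate to an element of $\Gamma$ up to the endpoints, that is, when $g\exp(W)g^{-1}\in\Gamma$. For any $W=aX+bY+cZ$, the curve $\exp(tW)$ is a geodesic iff $\langle [W,U],W\rangle=0$ for all $U$. Since $[W,U]\in\R Z$, this amounts to $c\,\langle Z,Z\rangle\,\omega(W,U)=0$, so $c=0$ is forced whenever $(a,b)\neq0$. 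Thus $\exp(t(aX+bY))$ are geodesics; these are one-parameter subgroups, i.e.\ "Ad-invariant-type" geodesics.

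Next I need $\exp(aX+bY)$ to be conjugate to an element of $\Gamma$. Conjugation by $\exp(sX+uY)$ changes the central component of $\exp(aX+bY+cZ)$ by $sb-ua$, and this is surjective onto $\R$ when $(a,b)\neq 0$. So every $\gamma\in\Gamma$ with $\bar\gamma=(a,b)\neq0$ is conjugate in $\Heis$ to $\exp(aX+bY)$. Hence there is a closed geodesic in the direction $aX+bY$, of causal type $\operatorname{sign}(a^2\pm b^2)$, where the form on $Z^\perp$ is $a^2+b^2$ in the Euclidean case and $a^2-b^2$ in the Lorentzian case.

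\textbf{Step 3: choosing the lattice element.} The projection $\bar\Gamma$ is a lattice in $\R^2$. In the Euclidean case every nonzero element gives a spacelike closed geodesic, which complements Step 1. In the Lorentzian case ($Z$ spacelike) I need a lattice vector with $a^2-b^2<0$, i.e.\ one lying in the open timelike cone. A lattice in $\R^2$ meets every nonempty open cone, so such a vector exists. This gives a timelike closed geodesic, while Step 1 supplies a spacelike one.

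The main point to check carefully is the geodesic criterion for left-invariant fields together with the conjugacy surjectivity, since these are what allow an arbitrary $\gamma$ to be realized. The geodesic criterion reads $\nabla_WW=0$ iff $\operatorname{ad}_W^*W=0$. Everything else is linear algebra.
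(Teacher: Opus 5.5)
Your proof is correct and follows essentially the same route as the paper. The central fibers give closed geodesics of the causal type of $Z$, and the one-parameter subgroups $\exp(tx)$ with $x\in\mathfrak z^\perp$ are geodesics because $\ad_x^*x=0$. A lattice direction in the appropriate open cone of $\mathfrak z^\perp$, followed by a conjugation absorbing the central component, then gives the other causal type; your surjectivity of $sb-ua$ is the paper's choice of $g$ with $[g,\exp(t_0x)]=z_0$, and your argument for $\Heis_3$ carries over verbatim to $\Heis_{2n+1}$ because the bracket form on $\mathfrak z^\perp$ is nondegenerate.
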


\subsubsection{Only timelike}
We do not know any example whose closed geodesics are all timelike.

\subsection{Further discussion}
On a compact Lorentz manifold, the existence of a timelike Killing vector field implies the existence of a closed timelike geodesic \cite{Sanchez2005static}. 
It also provides a Riemannian metric whose energy is a first integral of the geodesic flow.
In our work in preparation \cite{AlloutChaibRiemannianIntegrals}, we show that, in Lorentzian signature, the existence of such a first integral alone suffices to ensure the existence of a closed timelike geodesic.
This leaves open the following question.
\begin{problem}
Does there exist a compact pseudo-Riemannian manifold whose geodesic
flow preserves the energy of a Riemannian metric but which has no closed geodesics?
\end{problem}

\section{General construction}
\subsection{The Euler--Arnold equation}
Let $G$ be a connected Lie group and $\g$ be its algebra of left
invariant vector fields. We identify $\g$ with $T_eG$ the tangent space at the identity element $e\in G$.
For a smooth curve $\gamma:I\to G$, defined on a connected interval $I\subset\R$, one associates its velocity
curve $X_\gamma:I\to\g$ as follows
$$
X_\gamma(t)=\bigl(dL_{\gamma(t)^{-1}}\bigr)_{\gamma(t)}(\dot\gamma(t))\in\g.
$$
where $L_g$ denotes the left translation by an element $g\in G$.

Endow $G$ by a left-invariant semi-Riemannian metric $q$, that is, $q$ is obtained by left translating
an inner product, also denoted $q$, on the Lie algebra $\g$. We have
\begin{theorem}[Euler--Arnold Equation, \cite{Arnold-paper}]\label{thm:ea}
Let $V_q:\g\to\g$ be the $2$-homogeneous vector field, called the Euler--Arnold vector field, on $\g$ defined by
\begin{equation}\label{eq:ea}
V_q(x)=\ad_x^*x
\end{equation}
where $\ad_x^*$ denotes the $q$-adjoint of $\ad_x$. Then a smooth curve $\gamma:I\to G$ is a geodesic if and only if $X_\gamma$ is an integral curve for $V_q$.
\end{theorem}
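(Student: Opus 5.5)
The plan is to derive the geodesic equation of a left-invariant metric directly from the Koszul formula, written in terms of left-invariant vector fields. Fix a basis of $\g$ and regard its elements as left-invariant vector fields on $G$. For left-invariant fields $x,y,z$, the functions $q(x,y)$ are constant. The Koszul formula therefore reduces to
\[
2q(\nabla_x y,z)=q([x,y],z)-q([y,z],x)+q([z,x],y).
\]
Using the $q$-adjoint, $q([y,z],x)=q(\ad_y z,x)=q(z,\ad_y^*x)$ and $q([z,x],y)=-q(\ad_x z,y)=-q(z,\ad_x^*y)$. Since $q$ is non-degenerate, this gives
\[
\nabla_x y=\tfrac12\bigl([x,y]-\ad_x^*y-\ad_y^*x\bigr),
\qquad\text{and in particular}\qquad \nabla_x x=-\ad_x^*x .
\]

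Next, let $\gamma:I\to G$ be smooth and let $e_1,\dots,e_n$ be a basis of $\g$. Write $X_\gamma(t)=\sum_i a_i(t)e_i$. By definition of $X_\gamma$, we then have $\dot\gamma(t)=\sum_i a_i(t)\,e_i(\gamma(t))$, where $e_i$ now denotes the left-invariant field. The covariant derivative along $\gamma$ is
\[
\frac{D\dot\gamma}{dt}=\sum_i \dot a_i\,e_i+\sum_{i,j}a_ia_j\,\nabla_{e_i}e_j .
\]
The only delicate point of the argument is this Leibniz-type expansion for a field along a curve whose coefficients are taken in a frame of left-invariant fields. It is standard, since $\nabla$ is $C^\infty$-linear in the direction slot.

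We then symmetrise the double sum. The bracket term $[e_i,e_j]$ is antisymmetric in $(i,j)$, so it cancels, while $\ad^*_{e_i}e_j+\ad^*_{e_j}e_i$ is symmetric. Hence
\[
\sum_{i,j}a_ia_j\nabla_{e_i}e_j=-\ad^*_{X_\gamma}X_\gamma .
\]
Left-translating back to $e$ via $dL_{\gamma(t)^{-1}}$, which identifies the left-invariant frame with the constant basis, we obtain
\[
dL_{\gamma^{-1}}\frac{D\dot\gamma}{dt}=\dot X_\gamma-\ad^*_{X_\gamma}X_\gamma .
\]

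It follows that $\gamma$ is a geodesic, i.e. $D\dot\gamma/dt=0$, if and only if $\dot X_\gamma=\ad^*_{X_\gamma}X_\gamma=V_q(X_\gamma)$. This says exactly that $X_\gamma$ is an integral curve of the Euler--Arnold vector field $V_q$ in \eqref{eq:ea}. Finally, $V_q$ is $2$-homogeneous because $x\mapsto\ad_x^*x$ is quadratic in $x$. Note that non-degeneracy of $q$ is all that is used, so the argument is valid in any signature.
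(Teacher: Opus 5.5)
Your proof is correct. The paper gives no proof of this statement; it only cites Arnold. So your argument is a self-contained replacement for the citation, not a variant of something in the text. The computation holds up:
\begin{itemize}
\item With constant inner products, the Koszul formula gives $\nabla_x y=\tfrac12([x,y]-\ad_x^*y-\ad_y^*x)$.
\item The bracket term cancels under symmetrisation.
\item Pulling $D\dot\gamma/dt$ back to $e$ gives $\dot X_\gamma-\ad_{X_\gamma}^*X_\gamma$, which yields both implications at once.
\end{itemize}

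Your route gives two things the bare citation does not.

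First, it pins down the sign. The bracket in the Koszul formula is the bracket of left-invariant vector fields. This equals the bracket of $\g$ exactly because the paper defines $\g$ as the algebra of left-invariant fields; a right-invariant convention would produce $-\ad_x^*x$. Your sign is the one the paper uses: it reproduces $V_q(x,y,z)=(0,x^2,-xy)$ in Proposition~\ref{prop:heis-flat} and the system $\mathcal E_2$.

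Second, and more important here, Arnold works with positive definite (kinetic energy) metrics, while the paper applies the theorem in Lorentzian signature and in signature $(2,4)$. Your proof visibly uses only the non-degeneracy of $q$: to define $\ad_x^*$, and to recover $\nabla_xy$ from $q(\nabla_xy,\cdot)$. That is precisely what justifies the paper's use in indefinite signature.

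An equally signature-free alternative is Noether's theorem. Left-invariance makes $t\mapsto q(X_\gamma,\operatorname{Ad}_{\gamma^{-1}}\xi)$ constant along a geodesic for every $\xi\in\g$. Differentiating, using $\tfrac{d}{dt}\operatorname{Ad}_{\gamma^{-1}}\xi=-[X_\gamma,\operatorname{Ad}_{\gamma^{-1}}\xi]$, gives $\dot X_\gamma=\ad_{X_\gamma}^*X_\gamma$. That route is shorter for the forward implication, but the converse needs an ODE-uniqueness argument. Your direct computation of $D\dot\gamma/dt$ handles both directions and also gives the full connection formula.
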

\begin{remark}\label{rem:complete}
Observe that a left-invariant metric $q$ on $G$ is geodesically complete if and only if the Euler--Arnold vector field $V_q$ is complete.
\end{remark}

\subsection{Geodesics on compact quotients}

For a smooth curve $X:I\to\g$ defined on a connected interval $I$ containing $0$, let $\gamma:I\to G$ be the solution of
$$
\dot\gamma(t)=(dL_{\gamma(t)})_eX(t),\qquad \gamma(0)=e.
$$
$\gamma$ is the \emph{developing curve} of $X$ starting at $\gamma(0)=e$.
Assume $X:\R\to\g$ is $T$-periodic. Then
$$
\gamma(t+T)=\gamma(T)\gamma(t).
$$
It is natural to call $\gamma(T)$ the \emph{holonomy} of the loop $X$ and we denote it $H(X,T)$. 
If $q$ is a metric on $\g$ then we define the \emph{holonomy locus} of $q$, denoted $\mathcal{H}(q)$, as follows
$$
\mathcal{H}(q)=\left\{ H(X,T) \ \mid \ X \ \text{is a non-zero T-periodic solution of} \ V_q \right\}.
$$
\begin{remark}
We have the following
\begin{enumerate}
    \item The holonomy locus $\mathcal{H}(q)$ is closed under positive iteration. 
    Indeed, if $\delta=\gamma(T)$ corresponds to a
    $T$-periodic solution $X$, then
    $$
    \delta^n=\gamma(nT)=H(X,nT)\in\mathcal{H}(q),\qquad n\geq1,
    $$
    since $X$ is also $nT$-periodic.

    \item For every nonzero singularity $w$ of $V_q$,
    \[
    \{\exp(tw)\mid t\in\R^*\}\subset\mathcal{H}(q).
    \]
    Indeed, $w$ and $-w$ define constant solutions, periodic for every
    $T>0$, whose holonomies are $\exp(Tw)$ and $\exp(-Tw)$, respectively.
\end{enumerate}
\end{remark}
\begin{lemma}\label{holonomy-lemma}
    Let $q$ be a left-invariant metric on a connected Lie group $G$ and $\Gamma\subset G$ be a discrete subgroup. 
    Then $\Gamma\backslash G$ admits a closed geodesic if and only if a conjugate of $\Gamma$ intersects $\mathcal{H}(q)$.
\end{lemma}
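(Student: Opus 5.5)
The plan is to translate closedness of a geodesic on $\Gamma\backslash G$ into a statement about lifts to $G$, and then to use left-invariance to move the lift's starting point to $e$. Throughout, "closed geodesic" means a nonconstant geodesic $c:\R\to\Gamma\backslash G$ with $c(t+T)=c(t)$ and $\dot c(t+T)=\dot c(t)$ for some $T>0$. The projection $\pi:G\to\Gamma\backslash G$ is a local isometry and a covering, because $\Gamma$ acts on the left by isometries and $q$ is left-invariant.

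\emph{($\Rightarrow$).} Let $c$ be a closed geodesic of period $T$, and lift it to a geodesic $\tilde c:\R\to G$ with $\tilde c(0)=g$.
\begin{itemize}
\item Since $c(T)=c(0)$, $\dot c(T)=\dot c(0)$ and $\pi$ is a covering, there is $\delta\in\Gamma$ with $\tilde c(T)=\delta g$ and $\dot{\tilde c}(T)=dL_\delta\,\dot{\tilde c}(0)$.
\item The curves $t\mapsto\tilde c(t+T)$ and $t\mapsto\delta\tilde c(t)$ are geodesics, since $L_\delta$ is an isometry, and they have the same initial data. By uniqueness of geodesics they coincide: $\tilde c(t+T)=\delta\tilde c(t)$.
\item Set $\gamma=g^{-1}\tilde c$. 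This is a geodesic with $\gamma(0)=e$, by left-invariance again, and $\gamma(t+T)=(g^{-1}\delta g)\gamma(t)$.
\item Its velocity curve satisfies $X_\gamma(t+T)=X_\gamma(t)$, since left factors cancel in $dL_{\gamma^{-1}}\dot\gamma$. So $X_\gamma$ is $T$-periodic, and by Theorem~\ref{thm:ea} it is an integral curve of $V_q$.
\item $X_\gamma$ is nonzero because $c$ is nonconstant. Since $\gamma$ is the developing curve of $X_\gamma$, we get $H(X_\gamma,T)=\gamma(T)=g^{-1}\delta g$.
\end{itemize}
Hence $g^{-1}\Gamma g\cap\mathcal H(q)\neq\emptyset$.

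\emph{($\Leftarrow$).} Suppose $g^{-1}\delta g=H(X,T)$ for some $\delta\in\Gamma$ and a nonzero $T$-periodic solution $X$ of $V_q$. Let $\gamma$ be the developing curve of $X$.
\begin{itemize}
\item By Theorem~\ref{thm:ea}, $\gamma$ is a geodesic. As noted before the lemma, $\gamma(t+T)=\gamma(T)\gamma(t)$.
\item Put $\tilde c=g\gamma$. Then $\tilde c(t+T)=g\,g^{-1}\delta g\,\gamma(t)=\delta\tilde c(t)$.
\item Therefore $c=\pi\circ\tilde c$ satisfies $c(t+T)=c(t)$. Differentiating the identity $\tilde c(t+T)=\delta\tilde c(t)$ and projecting gives $\dot c(t+T)=\dot c(t)$.
\item $c$ is nonconstant, since $X\neq0$ and $\pi$ is a local diffeomorphism.
\end{itemize}
So $c$ is a closed geodesic.

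Two points need care.
\begin{itemize}
\item \emph{Periodic, not merely closed.} A geodesic that merely returns to its starting point is a geodesic loop, not a closed geodesic. The velocity condition $\dot c(T)=\dot c(0)$ is exactly what produces the deck element $\delta$ carrying the initial velocity correctly. Without it the step $\tilde c(t+T)=\delta\tilde c(t)$ fails.
\item \emph{Global definition.} In the incomplete case we need $X$ to be defined on all of $\R$, so that the curves above are globally defined. Periodicity gives this: a $T$-periodic orbit of $V_q$ extends to all times. For the converse direction, the lift of a periodic curve is likewise defined on $\R$.
\end{itemize}
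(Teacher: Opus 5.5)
Your proof is correct and follows the paper's argument: lift the closed geodesic, translate by $g^{-1}$ so the lift becomes the developing curve at $e$ with holonomy $g^{-1}\delta g$, and conversely translate the developing curve by $g$ and project. You also supply two steps the paper only asserts, namely the equivariance $\tilde c(t+T)=\delta\tilde c(t)$ (via uniqueness of geodesics and the velocity condition) and the $T$-periodicity of $X_\gamma$.
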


\begin{proof}
Let $\alpha$ be a closed geodesic in $\Gamma\backslash G$, of period $T>0$, and let $\widetilde\alpha:\R\to G$ be a lift.
Set $g=\widetilde\alpha(0)$. 
There exists $\delta\in\Gamma$ such that
$$
\widetilde\alpha(t+T)=\delta\widetilde\alpha(t)
\qquad\text{for all }t\in\R.
$$
The velocity curve $X_{\widetilde{\alpha}}$ is a nonzero $T$-periodic solution of $V_q$. 
Let $\gamma$ be the developing curve of $X_{\widetilde{\alpha}}$ starting at the identity. Precisely, $\gamma(t)=g^{-1}\widetilde\alpha(t)$. 
Thus
$$
H(X,T)=\gamma(T)=g^{-1}\delta g
\in g^{-1}\Gamma g\cap\mathcal{H}(q).
$$

Conversely, suppose that $H(X,T)=g^{-1}\delta g$ for some $\delta\in\Gamma$ and some nonzero $T$-periodic solution $X$ of $V_q$.
Let $\gamma$ be its developing curve starting at the identity.
The curve $t\mapsto g\gamma(t)$ is a geodesic and satisfies
$$
g\gamma(t+T)=gH(X,T)\gamma(t)=\delta g\gamma(t).
$$
Its projection to $\Gamma\backslash G$ is therefore a closed geodesic of period $T$.
\end{proof}

\section{Interesting Examples}
\subsection{Compact quotients of Lie groups with nontrivial center}
\subsubsection{Compact isometric actions}
\begin{proposition}{(c.f. Lemma~2.1 in \cite{AlloutBelkacemZeghib2024closed})}\label{prop:compact-action}
Let $(M,g)$ be a compact semi-Riemannian manifold and $K$ be a non-trivial connected
compact Lie group acting isometrically on $M$. 
Then $M$ admits closed geodesics.
More precisely, every closed one-parameter group in $K$ admits a non-trivial orbit which is a geodesic.
\end{proposition}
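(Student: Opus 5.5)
Let $S\subset K$ be a closed one-parameter subgroup. Since $K$ is non-trivial, connected and compact, it contains a circle, so such $S\cong S^1$ exists. Let $Z$ be its generating Killing field, $Z(x)=\frac{d}{dt}\big|_{t=0}\phi_t(x)$, where $\phi_t$ is the $S$-action. My plan is the classical argument that critical points of the "length" of a Killing field yield geodesic orbits. Consider the smooth function
\[
f:M\to\R,\qquad f(x)=g(Z(x),Z(x)).
\]
The key identity is that, for a Killing field $Z$ and any vector $v$,
\[
g(\nabla_Z Z,v)=-g(\nabla_v Z,Z)=-\tfrac12\, v\bigl(g(Z,Z)\bigr).
\]
The first equality is the skew-symmetry of $\nabla Z$, and the second is metric compatibility. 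Hence $\nabla_Z Z=-\tfrac12\,\mathrm{grad}\, f$. The gradient is taken with respect to the semi-Riemannian metric $g$, but it vanishes exactly where $df$ vanishes, by nondegeneracy.

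So at a critical point $x_0$ of $f$ we get $\nabla_ZZ(x_0)=0$. The function $f$ is $S$-invariant because $S$ acts by isometries commuting with its own generator. Therefore $df$, and hence $\nabla_ZZ$, vanishes along the whole orbit $S\cdot x_0$. The orbit $t\mapsto\phi_t(x_0)$ has velocity $Z$ and acceleration $\nabla_ZZ=0$, so it is a geodesic. It is closed, because $S$ is a circle, and it is non-constant provided $Z(x_0)\neq0$.

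The main obstacle is ensuring a critical point with $Z(x_0)\ne0$. In the Riemannian case one takes the maximum of $f$, but here $f$ may vanish at its extrema, for instance if $Z$ is null everywhere. I would handle this in three cases.
\begin{enumerate}
\item If $f$ is not identically zero, then $\max f>0$ or $\min f<0$. At such an extremum $x_0$ we have $f(x_0)\neq0$, so $Z(x_0)\ne0$.
\item If $f\equiv0$, then $df\equiv0$ everywhere, so every orbit is a geodesic.
\item In that case, pick any $x_0$ with $Z(x_0)\ne0$. Such a point exists because the action is non-trivial and effective up to a finite kernel on $S$; if $S$ acted trivially, replace it by another circle in $K$. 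If the action of $K$ is not effective, one first quotients by the kernel, which is tacitly assumed.
\end{enumerate}

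Finally, $M$ compact guarantees the extrema exist. This gives a non-trivial closed geodesic orbit of the chosen closed one-parameter group, proving both assertions.
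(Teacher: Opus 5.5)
Your proof is correct. The identity $\nabla_Z Z=-\tfrac12\,\mathrm{grad}\,g(Z,Z)$ for the Killing generator $Z$ of the circle makes every orbit through a critical point of the $S$-invariant function $g(Z,Z)$ a closed geodesic. Your case split (a nonzero extremum when $g(Z,Z)\not\equiv 0$, every orbit when $g(Z,Z)\equiv 0$) is complete, and so is the effectiveness caveat, which the ``more precisely'' clause genuinely needs.

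The paper gives no proof of its own and simply cites Lemma~2.1 of \cite{AlloutBelkacemZeghib2024closed}. Your argument is the standard Killing-field critical-point argument used for such statements, so nothing is missing.
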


\begin{corollary}\label{cor:center}
Let $G$ be a connected Lie group whose algebra $\g$ has a non-trivial center $\mathfrak z(\g)$.
Then for any left-invariant metric on $G$ and any cocompact lattice $\Gamma\subset G$, the compact quotient
$\Gamma\backslash G$ admits closed geodesics.
\end{corollary}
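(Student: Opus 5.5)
Take a nonzero central element $z\in\mathfrak z(\g)$ and consider the flow of right multiplication $R_{\exp(tz)}$ on $G$. Since $z$ is central, $\exp(tz)$ commutes with every element of $G$, so $g\exp(tz)=\exp(tz)g$. Hence right multiplication by $\exp(tz)$ equals left multiplication, which is an isometry of the left-invariant metric. Right multiplications also commute with the left $\Gamma$-action, so this one-parameter group descends to an isometric $\R$-action $\phi_t$ on the compact quotient $M=\Gamma\backslash G$. The action is nontrivial because the generating vector field is the left-invariant field $z\neq0$, which vanishes nowhere.

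Let $K$ be the closure of $\{\phi_t\}$ in the isometry group $\mathrm{Isom}(M)$. We then split into two cases.

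\textbf{Case 1: $K$ is compact.} This holds, for instance, when $\mathrm{Isom}(M)$ is compact. Then $K$ is a connected compact abelian Lie group, i.e.\ a torus. It is nontrivial because $\phi_t\neq\mathrm{id}$ for small $t\neq0$. Proposition~\ref{prop:compact-action} then yields a closed geodesic.

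\textbf{Case 2: $K$ is not compact.} The isometry group of a compact pseudo-Riemannian manifold need not be compact, so this case must be handled, and I expect it to be the main obstacle. I would bypass the closure argument with a direct computation in $\g$.

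First, $z$ is a singularity of $V_q$. Indeed, $\ad_z=0$, so for every $y$ we have
\[
q(\ad_z^*z,y)=q(z,[z,y])=0,
\]
and nondegeneracy of $q$ gives $V_q(z)=0$. The same holds for every element of $\mathfrak z(\g)$. By item (2) of the Remark, $\exp(tz)\in\mathcal H(q)$ for all $t\neq0$. By Lemma~\ref{holonomy-lemma}, it therefore suffices to find some nonzero $z\in\mathfrak z(\g)$ and some $t\neq0$ with $\exp(tz)$ conjugate to an element of $\Gamma$.

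Here I would use the lattice structure. Let $Z$ denote the identity component of the center of $G$, whose Lie algebra is $\mathfrak z(\g)$. The goal is to show that $\Gamma\cap Z$ contains an element of the form $\exp(z)$ with $z\in\mathfrak z(\g)$ nonzero. For nilpotent or solvable $G$ this follows from the standard fact that a lattice meets the center in a lattice of the center, a result of Mostow and Raghunathan type. In general, I would argue as follows.
\begin{itemize}
\item Consider the closed subgroup $\overline{\Gamma Z}$ and project to $G/\overline{Z}$.
\item If $\Gamma\cap Z$ is not cocompact in $Z$, use the compact orbits $\Gamma\backslash\Gamma\overline{\exp(\R z)}$ of the $Z$-action on $M$.
\item Take such a compact orbit, which is a torus orbit of the closure of the central flow inside $M$ itself, not inside $\mathrm{Isom}(M)$.
\end{itemize}
The key remark is that the closure of the orbit $\Gamma\backslash\Gamma\exp(\R z)$ is an orbit of the closed subgroup $\overline{\Gamma\exp(\R z)}$. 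Choosing $z$ so that this closure is a compact torus orbit, which is automatic when $\Gamma\cap Z$ is a lattice in $Z$, would reduce Case 2 to Case 1 applied to this compact torus acting on $M$. On such a torus orbit one can also apply Proposition~\ref{prop:compact-action} directly.
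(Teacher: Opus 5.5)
There is a genuine gap, and it is in your Case~2. The real problem is that you did not notice that Case~2 cannot occur. Your own first observation is that right multiplication by the central element $\exp(tz)$ coincides with left multiplication. This shows that $\phi_t$ preserves the metric on $M$ induced by \emph{every} left-invariant metric on $G$, in particular by a left-invariant \emph{Riemannian} metric $g_R$. By Myers--Steenrod, $\mathrm{Isom}(M,g_R)$ is a compact Lie group, so the closure of $\{\phi_t\}$ there is a nontrivial torus $K$. Preserving $q$ is a closed condition, since on $\mathrm{Isom}(M,g_R)$ the compact-open and $C^\infty$ topologies agree. Hence $K$ also acts by $q$-isometries, and Proposition~\ref{prop:compact-action} applies. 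That is the paper's entire proof.

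The argument you offer for Case~2 does not work. The ``standard fact'' that a lattice meets the center in a lattice of the center holds for nilpotent groups but fails for solvable ones. Let $G=\R\ltimes_\phi\R^3$ with $\phi(t)=\diag(R_{2\pi t},1)$, where $R_\theta$ is the rotation by $\theta$ and $u_1,u_2,u_3$ is the standard basis of $\R^3$; then $\mathfrak z(\g)=\R u_3$. Take $\Gamma=\Z\times\Lambda$ with $\Lambda=\Z u_1\oplus\Z u_2\oplus\Z(\alpha u_1+u_3)$ and $\alpha\notin\Q$. Because $\phi(n)=I$, this is a cocompact lattice, and $\Gamma\cap\exp(\mathfrak z(\g))=\{e\}$. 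Central elements are their own conjugates, so no holonomy $\exp(Tz)$ with $z\in\mathfrak z(\g)\setminus\{0\}$ is conjugate into $\Gamma$. Your reduction to constant central solutions therefore cannot succeed in general. In this example the closed geodesics come from closed one-parameter subgroups of $K$ other than the central flow, which is dense in $K$.

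Your bullet-point fallback also conflates two kinds of compactness. The orbit closure $\Gamma\backslash\overline{\Gamma\exp(\R z)}$ is compact merely because $M$ is. Proposition~\ref{prop:compact-action} instead needs a compact \emph{group of isometries}, i.e.\ compactness of the closure of the flow in a transformation group. That is exactly what the Riemannian metric $g_R$ provides.
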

\begin{proof}
Choose $0\neq z\in\mathfrak z(\g)$. The right action by $\exp(tz)$ descends
to a flow on $\Gamma\backslash G$. This flow is isometric for any left-invariant metric since left and right actions by $\exp(tz)$ on $G$ are the same (i.e. $\exp(tz)$ is central). In particular, $\exp(tz)$ preserves a Riemannian metric on $\Gamma\backslash G$. Thus, Proposition \ref{prop:compact-action} applies.
\end{proof}

\begin{example}
If $G$ is a connected nilpotent Lie group, then it has a non-trivial center
and Corollary~\ref{cor:center} applies. Moreover, for any cocompact lattice $\Gamma$ of a nilpotent Lie group $N$, the intersection of $\Gamma$ with the center $\mathcal{Z}(N)$ of $N$ is a lattice $\Gamma_0$ of $\mathcal{Z}(N)$. 
The quotient $\Gamma\backslash N$ is a principal $\mathcal{Z}(N)/\Gamma_0$-bundle over $(\Gamma/\Gamma_0)\backslash(N/\mathcal{Z}(N))$ with totally geodesic flat fibers.

\end{example}

\subsection{Quotients of the Heisenberg groups}
Let $\Heis_{2n+1}$ denote the Heisenberg group of dimension
$2n+1$. It is characterized by the fact that it is a two-step nilpotent Lie group with
one-dimensional center $Z$ isomorphic to $\R$. Every cocompact lattice $\Gamma$ intersects $Z$ in a lattice,
and its projection to $\Heis_{2n+1}/Z\simeq \R^{2n}$ is a  lattice. 

\begin{proposition}\label{prop:heis-nondegenerate}
Let $q$ be a left-invariant metric on $\Heis_{2n+1}$ such that the center $Z$ is non-degenerate.
Then every compact quotient $\Gamma\backslash\Heis_{2n+1}$ admits both spacelike and timelike geodesics.
\end{proposition}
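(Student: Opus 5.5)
The plan is to use Lemma~\ref{holonomy-lemma}: compute the Euler--Arnold field explicitly, read off its singularities, and check that conjugates of $\Gamma$ meet the holonomy locus in directions of both causal types. Throughout, $q$ is taken to be indefinite, since otherwise the statement is vacuous.

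\emph{Step 1: the Euler--Arnold field.} Let $\mathfrak h$ be the Lie algebra of $\Heis_{2n+1}$ and $z$ a generator of its center. Since $q(z,z)=:c\neq 0$, we have the $q$-orthogonal splitting $\mathfrak h=V\oplus\R z$ with $V=z^{\perp}$ nondegenerate. The bracket is $[v+az,w+bz]=\omega(v,w)z$ for a symplectic form $\omega$ on $V$. Define $J:V\to V$ by $q(Jv,w)=\omega(v,w)$; then $J$ is invertible. For $x=v+az$ and any $y=w+bz$,
\[
q(\ad_x^*x,y)=q(x,[x,y])=a\,c\,\omega(v,w).
\]
Hence $V_q(v+az)=ac\,Jv$. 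In particular the singularities of $V_q$ are exactly $V\cup\R z$: either $a=0$, or $Jv=0$, which forces $v=0$.

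\emph{Step 2: conjugation in $\Heis_{2n+1}$.} For $u,v\in V$ and $b\in\R$ we have $\exp(u)\exp(v+bz)\exp(-u)=\exp\bigl(v+(b+\omega(u,v))z\bigr)$. If $v\neq0$, choose $u$ with $\omega(u,v)=-b$; this is possible because $\omega$ is nondegenerate. So every non-central element $\exp(v+bz)$ is conjugate to $\exp(v)$. By the Remark following the definition of $\mathcal H(q)$, $\exp(v)\in\mathcal H(q)$, because $v$ is a nonzero singularity.

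Lemma~\ref{holonomy-lemma} then yields a closed geodesic, namely the projection of $t\mapsto g\exp(tv)$, whose causal type is that of $v$. This works for every $v$ in the projection $\Lambda\subset V$ of $\Gamma$, which is a lattice. Separately, the center $\Gamma\cap Z$ is a lattice in $Z$, and $\exp(tz)$ is also in $\mathcal H(q)$. This gives closed fibre geodesics of the causal type of $z$, as in Corollary~\ref{cor:center}.

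\emph{Step 3: causal types.} The only genuinely geometric point is that the lattice $\Lambda$ contains vectors of the required types.
\begin{itemize}
\item If $q|_V$ is indefinite, the sets of timelike and of spacelike vectors in $V$ are nonempty open cones invariant under positive scaling. A cocompact lattice meets every such cone: a scaled-up ball inside the cone has radius exceeding the covering radius of $\Lambda$. So both types occur among the directions $v\in\Lambda$.
\item If $q|_V$ is definite, indefiniteness of $q$ forces $z$ to have the opposite sign. In that case the $V$-directions supply one causal type and the central fibres supply the other.
\end{itemize}

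I expect no serious obstacle. The only care needed is the sign and definition bookkeeping in Step 1, in particular checking that $\ad_x^*x$ has no $z$-component, and the elementary lattice-meets-open-cone argument in Step 3.
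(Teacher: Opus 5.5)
Your proposal is correct and follows the paper's proof. The Euler--Arnold field vanishes on $\mathfrak z^{\perp}$, and the projected lattice meets the open cone of the required causal type. A conjugation then removes the central component: the paper chooses $g$ with $[g,\exp(t_0x)]=z_0$, and you choose $u$ with $\omega(u,v)=-b$. The central fibres supply the causal type of $z$. Your explicit case split on whether $q|_{\mathfrak z^{\perp}}$ is definite spells out what the paper leaves as ``similarly''. One small wording fix: for definite $q$ the statement is false rather than vacuous, so the indefiniteness assumption is genuinely needed.
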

\begin{proof}
Assume first that the center $Z$ of $\Heis_{2n+1}$ is spacelike. For any lattice $\Gamma\subset\Heis_{2n+1}$, the quotient $\Gamma\backslash\Heis_{2n+1}$ is a circle bundle over the torus $\T^{2n}$ with spacelike geodesic fibers.

Let $\mathfrak z^\perp =W\subset\mathfrak{heis}_{2n+1}$ denote the subspace orthogonal to the center $\mathfrak z$. The Euler--Arnold vector field $V_q$ vanishes on $W$. Indeed, for any $x\in W$ we have $q(\ad_x^*x,y)=q(x,\ad_x y)$ for every $y\in \mathfrak{heis}_{2n+1}$. But $\ad_x y\in \mathfrak z$ since $[\mathfrak{heis}_{2n+1},\mathfrak{heis}_{2n+1}]=\mathfrak{z}$. This shows that $q(\ad_x^*x,y)=0$ for all $y$, that is, $\ad_x^*x=0$.

So, $\exp(tx)$ is a geodesic for every $x\in W$ and projects to a line $l_x$ through the origin in $\Heis_{2n+1}/Z\simeq \R^{2n}$. Choose $x\in W$ a timelike vector such that $l_x$ is a rational line, i.e $l_x$ intersects $\Gamma/\Gamma_0$ non-trivially (such $x$ exists since timelike vectors constitute an open cone). This means that there is $t_0>0$ such that $z_0.\exp(t_0x)\in \Gamma$ for some $z_0\in Z$.

Recall that $g\exp(t_0x)g^{-1}=[g,\exp(t_0x)]\exp(t_0x)$ where $[g,\exp(t_0x)]$ denotes the commutator of $g$ and $\exp(t_0x)$. We choose $g$ such that $[g,\exp(t_0x)]=z_0$. This shows that $g\exp(t_0x)$ projects to a closed timelike geodesic in $\Gamma\backslash\Heis_{2n+1}$.

We treat the case where the center is timelike similarly.
\end{proof}

\subsubsection{A remarkable flat Lorentzian example}
\begin{proposition}\label{prop:heis-flat}
Let $q$ be a left-invariant Lorentzian metric on $\Heis_3$ such that the center $Z$ is
lightlike. Then $q$ is flat, and there exists a lattice $\Gamma\subset \Heis_3$ such that the fibers of the principal circle bundle $\Gamma\backslash\Heis_3$ are the only closed geodesics.
\end{proposition}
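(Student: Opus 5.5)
The plan is to write the Euler--Arnold field $V_q$ of Theorem~\ref{thm:ea} in an adapted basis, find all its periodic solutions, and then pick $\Gamma$ so that Lemma~\ref{holonomy-lemma} leaves only the central holonomies.

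\textbf{Adapted basis and the Euler--Arnold field.} Let $Z$ span the center, with $q(Z,Z)=0$. The plane $Z^\perp$ is degenerate and contains $Z$. Choose $Y\in Z^\perp$ spacelike with $q(Y,Y)=1$, and then $X$ null with $q(X,Z)=1$ and $q(X,Y)=0$. Since $[X,Y]$ is a nonzero multiple of $Z$, rescaling $X,Z$ (and keeping the normalizations) gives $[X,Y]=Z$.

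For $v=aX+bY+cZ$ we have $[v,X]=-bZ$, $[v,Y]=aZ$, $[v,Z]=0$, and $q(v,Z)=a$. Hence
$q(\ad_v^*v,X)=-ab$, $q(\ad_v^*v,Y)=a^2$ and $q(\ad_v^*v,Z)=0$. This gives
\[
V_q(v)=a^2Y-abZ .
\]
The Euler--Arnold system is therefore $a'=0$, $b'=a^2$, $c'=-ab$. So $a$ is constant and $b=b_0+a^2t$. A periodic solution forces $a=0$, and then $b,c$ are constant.

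Thus the nonzero periodic solutions are exactly the constants $v\in Z^\perp=\Span(Y,Z)$, which is an abelian subalgebra. Consequently
\[
\mathcal H(q)=\{\exp(t(bY+cZ)) \mid t\neq 0,\ (b,c)\neq(0,0)\}.
\]
Flatness I would check directly. I would compute the Levi-Civita connection from $\nabla_uv=\tfrac12([u,v]-\ad_u^*v-\ad_v^*u)$ on the basis $X,Y,Z$ and verify that $R$ vanishes on all basis triples. This is a finite routine calculation, and it should be the most tedious but not conceptually hard part.

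\textbf{Choice of lattice.} Fix an irrational $\alpha$ and let $Y'=\alpha X+Y$, so that $[X,Y']=Z$. Let $\Gamma$ be generated by $\exp X$, $\exp Y'$ and $\exp Z$. This is a standard cocompact lattice, and its projection to $\Heis_3/Z\simeq\R^2$ is $\Lambda=\Z X\oplus\Z Y'$.

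Conjugation by an element of $\Heis_3$ changes only the $Z$-component of an element, so every conjugate of $\Gamma$ has the same projection $\Lambda$. An element $\exp(t(bY+cZ))$ projects to $tbY$. If $tbY=mX+nY'$, then $m+n\alpha=0$, which forces $m=n=0$ and hence $tb=0$.

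Therefore, by Lemma~\ref{holonomy-lemma}, a closed geodesic must have constant velocity $v$ with $b=0$, i.e.\ $v=cZ$. Its lift is $g\exp(tcZ)$, which is a fiber of the circle bundle $\Gamma\backslash\Heis_3\to\T^2$. Conversely, each fiber is a closed geodesic, because $\exp(cZ)$ lies in $\Gamma$ for suitable $c$ and $Z$ is a singularity of $V_q$. All fibers are null, since $q(Z,Z)=0$.

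The only delicate point is making sure no conjugate of $\Gamma$ meets $\exp(\R Y)$ outside the center; the computation of $\Lambda$ above settles this.
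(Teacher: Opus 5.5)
Your proposal is correct and follows essentially the paper's route. You use the same adapted basis, the same Euler--Arnold field $V_q(v)=a^2Y-abZ$, the same holonomy locus $\exp(\Span(Y,Z))\setminus\{e\}$ and the same irrationally sheared lattice; your remark that conjugation preserves the projection to $\Heis_3/Z$ does the work of the paper's normality of $A$.

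The differences are cosmetic. The paper cites a table for flatness, while you propose a direct check. That check does go through: $\nabla_XX=-Y$ and $\nabla_XY=Z$, all other products of basis vectors vanish, so $\nabla_Y=\nabla_Z=0$ and $R=0$. Also, to reach $[X,Y]=Z$ you may need to replace $Y$ by $-Y$, since $X\mapsto sX$, $Z\mapsto s^{-1}Z$ multiplies the structure constant by $s^2>0$ and cannot change its sign.
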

\begin{proof}
Since the center is lightlike, we can choose a basis $\{X,Y,Z\}$
of $\mathfrak{heis}_3$ such that $[X,Y]=Z$ and
\[
q(xX+yY+zZ)=y^2+2xz.
\]
This metric is flat (c.f. Table 2. in \cite{ChaibFerreiraZeghib2026}) and the associated Euler--Arnold vector field is
\[
V_q(x,y,z)=(0,x^2,-xy).
\]
Its singular set is the abelian ideal $P=\operatorname{span}\{Y,Z\}$.
Outside $P$, the coordinate $x$ is constant and $y$ is strictly increasing. 
Thus the only periodic solutions are the constant ones, and
\[
\mathcal{H}(q)=A\setminus\{e\},
\qquad A:=\exp(P).
\]
Notice that $A$ is a normal subgroup containing the center $C:=\exp(\R Z)$.

Let $\pi:\Heis_3\to\Heis_3/C\simeq\R^2$ be the natural projection,
where the quotient is identified with $\R^2$ using the images
of $X$ and $Y$. Fix $\theta\in\R\setminus\mathbb{Q}$ and set
\[
\Gamma=
\big\langle \exp(X),\,\exp(\theta X+Y),\,\exp(Z)\big\rangle.
\]
This is a cocompact lattice: it is the image of the standard Heisenberg lattice $\Heis_3(\Z)$ under the automorphism induced by
\[
X\longmapsto X,\qquad Y\longmapsto\theta X+Y,\qquad Z\longmapsto Z.
\]
Moreover,
\[
\pi(\Gamma)=\mathbb{Z}(1,0)+\mathbb{Z}(\theta,1),
\qquad
\pi(A)=\{0\}\times\R.
\]
Since $\theta$ is irrational, these two subgroups intersect only
at the origin. Therefore
\[
\Gamma\cap A=\Gamma\cap C.
\]
As $A$ is normal and $C$ is central, for every $g\in\Heis_3$ we obtain
\[
g^{-1}\Gamma g\cap\mathcal{H}(q)
=
g^{-1}(\Gamma\cap A)g\setminus\{e\}
=
(\Gamma\cap C)\setminus\{e\}.
\]
This shows that the fibers are the only closed geodesics.
\end{proof}

\subsection{A complete closed Lorentz 3-manifold without non-spacelike closed geodesics}
Galloway's example in \cite{galloway1986compact} is incomplete and has no non-spacelike closed geodesics.
Theorem \ref{thm:sol-intro} gives a complete example with the same property.

\begin{proof}[Proof of Theorem \ref{thm:sol-intro}]
Use the metric $q_2$ and notation of Subsec.~\ref{subsec:sol-metric}, with
$$
q_2(xh+ye_1+ze_2)=x^2-y^2+2yz.
$$
Its Euler--Arnold equation is 
$$
\dot x=y^2,\qquad \dot y=-xy,\qquad \dot z=x(z-2y).
$$
The metric $q_2$ is complete (see Subsec. 7.2 in \cite{CF-partII}).
If a solution is periodic, integration of $\dot x=y^2$ forces $y=0$.
Then $x$ is constant and $z(t)=z(0)e^{x(t)}$, so periodicity forces $xz(0)=0$.
The only periodic solutions are consequently the singularities in $\R h\cup\R e_2$.
The first line is spacelike and the second lightlike. The holonomy locus $\mathcal{H}(q)$ is therefore $(\exp(\R h)\cup \exp(\R e_2))\setminus \{e\}$.

Let $\Gamma\subset \Sol$ be a cocompact lattice. Its intersection with the nilradical $\R^2$ is a lattice that intersects the normal eigen-directions $\exp(\R e_1)$ and $\exp(\R e_2)$ trivially. 
This shows that all conjugates of $\Gamma$ intersect the normal direction trivially.
Hence, the conjugates of $\Gamma$ intersect the holonomy locus only in $\exp(\R h)\setminus \{e\}$. This finishes the proof.
\end{proof}

\section{A complete compact manifold of signature \texorpdfstring{$(2,4)$}{(2,4)} without closed geodesics}

\subsection{A metric on \texorpdfstring{$\Euc$}{Euc}}
The group $\Euc=\rm{SO}(2)\ltimes\R^2$ denotes the identity component of the
isometry group of the Euclidean plane. Its Lie algebra $\mathfrak{euc}$, endowed with the basis $e,f_2,f_3$, with
bracket relations $[e,f_2]=-f_3$ and $[e,f_3]=f_2$. Let $q_1$ be the Lorentzian metric on $\mathfrak{euc}$ given by
$$
q_1(x,y,z)=2xy+z^2.
$$
The geodesic flow on $\mathfrak{euc}$ is given by the system of equations
$$
\mathcal E_1:\quad
\begin{cases}
\dot x=-xz,\\
\dot y=z(y-x),\\
\dot z=x^2.
\end{cases}
$$
The abelian subalgebra $Q:=\{x=0\}$ is invariant and the geodesic vector field on $Q$ has the form $(0,y,z)\mapsto(0,yz,0)$.
We get that there are no periodic solutions except the constant ones.
Every Lorentz metric on $\mathfrak{euc}$ is geodesically complete, by Guediri \cite{Guediri1996}; see also \cite{ElshafeiFerreiraSanchezZeghib2024}. In particular, $q_1$ is.
\begin{remark}
    The left-invariant metric used here is the same as in Theorem 1.1 in \cite{AlloutBelkacemZeghib2024closed}.
\end{remark}

\subsection{A metric on \texorpdfstring{$\Sol$}{Sol}}\label{subsec:sol-metric}
Let $\Sol=\R\ltimes\R^2$ be the solvable three-dimensional group given by the
representation $t\mapsto\left(\begin{smallmatrix}e^t&0\\0&e^{-t}\end{smallmatrix}\right)$.
Its Lie algebra, denoted by $\mathfrak{sol}$, has a basis $h,e_1,e_2$ with bracket
relations
$$
[h,e_1]=e_1\quad\text{and}\quad[h,e_2]=-e_2.
$$
With respect to this basis, let $q_2$ be the Lorentzian metric on $\mathfrak{sol}$ given by
$$
q_2(x,y,z)=x^2-y^2+2yz.
$$
The geodesic flow on $\mathfrak{sol}$ is given by the following system of equations
$$
\mathcal E_2:\quad
\begin{cases}
\dot x=y^2,\\
\dot y=-xy,\\
\dot z=x(z-2y).
\end{cases}
$$
This system has only two singular directions: $\R h$ and $\R e_2$.
One can check that there are no periodic solutions except the constant ones.
This metric is geodesically complete; see, e.g., \cite[Subsection 7.2]{CF-partII}.

\subsection*{The main example}
Put $G=\Sol\times\Euc$ and identify its Lie algebra with $\g=\mathfrak{euc}\oplus\mathfrak{sol}$, endowed
with the metric $q=q_1\oplus q_2$ of signature $(2,4)$. Both factors are geodesically complete, so the
product metric is geodesically complete.

The singularities of the geodesic vector field for $q$ are exactly the elements of the four planes
(abelian subalgebras)
$$
\begin{aligned}
\mathfrak p_1&=\Span(f_2,h),&\quad \mathfrak p_2&=\Span(f_2,e_2),&
\quad\mathfrak p_3&=\Span(f_3,h),&\quad\mathfrak p_4&=\Span(f_3,e_2).
\end{aligned}
$$
Put $P_i:=\exp(\mathfrak p_i)$ for $1\leq i\leq4$. Each $P_i$ is isomorphic to $\R^2$.
Since the geodesic equation splits into the two factor equations, its only periodic solutions on $\g$ are the constant ones. In other words, $\mathcal{H}(q)=\bigcup P_i\setminus \{e\}$.

We recall the following Lemma.
\begin{lemma}\label{lem:product-closing}
Let $\Gamma\subset G$ be a discrete subgroup and endow $M:=\Gamma\backslash G$ with the metric induced by
the left-invariant metric on $G$ generated by $q$. Then $M$ admits a nonconstant closed geodesic if and
only if there exist a non-trivial element $\gamma\in\Gamma$ and $g\in G$ such that $\gamma\in gP_ig^{-1}$ for some $1\leq i\leq4$.
\end{lemma}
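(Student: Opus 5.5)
The lemma is a direct specialization of Lemma~\ref{holonomy-lemma} to the present group $G=\Sol\times\Euc$ with the product metric $q=q_1\oplus q_2$. The plan is therefore: first identify the holonomy locus $\mathcal H(q)$ precisely, and then translate the condition ``a conjugate of $\Gamma$ meets $\mathcal H(q)$'' into the form stated.

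\emph{Step 1: the periodic solutions of $V_q$.} Since $q$ is an orthogonal direct sum of metrics on the two ideals $\mathfrak{euc}$ and $\mathfrak{sol}$, the Euler--Arnold field splits as $V_q=V_{q_1}\oplus V_{q_2}$. A solution is $T$-periodic if and only if both components are $T$-periodic.

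For $\mathcal E_2$, the relation $\dot x=y^2$ forces $y\equiv0$ along any periodic solution. Then $x$ is constant and $z(t)=z(0)e^{xt}$, so periodicity gives $xz(0)=0$. The periodic solutions are therefore the constants lying in $\R h\cup\R e_2$, exactly as in the proof of Theorem~\ref{thm:sol-intro}.

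For $\mathcal E_1$, the relation $\dot z=x^2$ forces $x\equiv0$. On $Q$ the field reduces to $(0,yz,0)$, so $z$ is constant and $y(t)=y(0)e^{zt}$. Periodicity gives $zy(0)=0$, so the periodic solutions are the constants in $\R f_2\cup\R f_3$.

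Combining the two factors, the nonzero periodic solutions are exactly the constants $w\in\bigcup_i\mathfrak p_i\setminus\{0\}$. The holonomy of the constant solution $w$ over any period $T>0$ is $\exp(Tw)$. Since $\mathfrak p_i$ is an abelian subalgebra and a vector space, we have $\mathcal H(q)=\bigcup_i P_i\setminus\{e\}$. Here each $P_i=\exp(\mathfrak p_i)$ is a closed subgroup isomorphic to $\R^2$, because the exponential map is injective on these planes in $\Sol\times\Euc$: on $\R h$ and on $\R e_2$ it is injective in $\Sol$, and on $\mathfrak p_i$ it is a product of injective maps.

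\emph{Step 2: apply Lemma~\ref{holonomy-lemma}.} By that lemma, $M$ has a closed geodesic if and only if some $g^{-1}\Gamma g$ meets $\mathcal H(q)$. This means there are $\gamma\in\Gamma$ and $g\in G$ with $g^{-1}\gamma g\in P_i\setminus\{e\}$, which is the same as $\gamma\ne e$ and $\gamma\in gP_ig^{-1}$. Closed geodesics produced by Lemma~\ref{holonomy-lemma} come from nonzero solutions, so they are nonconstant; this gives both directions.

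\emph{Main obstacle.} The only real content is Step 1, the exclusion of nonconstant periodic orbits. It rests on the monotone quantities $\dot x=y^2$ and $\dot z=x^2$ together with the explicit integration on the invariant subsets. Once that is established, the rest is formal.
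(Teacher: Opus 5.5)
Your proposal is correct and is essentially the paper's argument. Like the paper, you use the splitting of the Euler--Arnold field to show that the only periodic solutions are constants, giving $\mathcal H(q)=\bigcup_i P_i\setminus\{e\}$, and then apply Lemma~\ref{holonomy-lemma} directly. Your explicit integration of $\mathcal E_1$ and $\mathcal E_2$, and your check that $\exp$ is injective on each $\mathfrak p_i$ (needed so that a nonconstant closed geodesic yields a non-trivial $\gamma$), fill in details that the paper leaves implicit.
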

\begin{proof}
This follows immediately from Lemma~\ref{holonomy-lemma}.
\end{proof}

\subsection{Irreducible lattices}
We think of $G=\Sol\times\Euc$ as the semidirect product
$(\operatorname{SO}(2)\times\R)\ltimes(V_1\oplus V_2)$, where $V_1$ and $V_2$ are the invariant planes
$\exp(\Span(f_2,f_3))$ and $\exp(\Span(e_1,e_2))$, respectively.
If an element $a_0\in \operatorname{SO}(2)\times\R$, with a non-vanishing $\R$-factor, acts by a linear map $A$
preserving a lattice $\Lambda\subset\R^4=V_1\oplus V_2$, then
the semidirect product $\Z\ltimes_A\Lambda$ is a co-compact lattice in $G$.

The condition that $A$ preserves a lattice in $\R^4$ means that $A$ is conjugate to an element of
$\operatorname{SL}(4,\Z)$. In other words, we are looking for a rotation $B_1\in \operatorname{SO}(2)$ and a diagonal hyperbolic
element $B_2=\diag(\lambda,\lambda^{-1})$, with $\lambda>1$, such that the $4\times4$ matrix $A=\diag(B_1,B_2)$ is $\rm GL(4,\R)$-conjugate to an element of $\operatorname{SL}(4,\Z)$.

Let $A$ be such an element with $B_1$ an irrational rotation, and denote by $\Lambda\subset V_1\oplus V_2$ its invariant
lattice. We call $\Gamma=\Z\ltimes_A\Lambda$ an \emph{irreducible lattice} of $G$.
\begin{remark}
Irreducible lattices in $G$ are characterized by the property that their intersections
with both factors $\Sol$ and $\Euc$ are trivial. This justifies the terminology.
\end{remark}

\subsubsection{Construction of irreducible lattices}[c.f. Remark 5.2 in \cite{AlloutBelkacemZeghib2024closed}]
Let $\Q(\sqrt2)=\{a+b\sqrt2\mid a,b\in\Q\}$ denote the quadratic
extension of $\Q$ by $\sqrt2$. The ring of integers of $\Q(\sqrt2)$ is
$$
\Z[\sqrt2]=\{m+n\sqrt2\mid m,n\in\Z\}.
$$
The Galois automorphism $\sigma:\Q(\sqrt2)\to\Q(\sqrt2)$, given by $\sigma(a+b\sqrt2)=a-b\sqrt2$, leaves invariant
the ring $\Z[\sqrt2]$. The subset
$$
\{(x,\sigma(x))\mid x\in\Z[\sqrt2]\}
$$
is a lattice in $\R^2$. In particular, the subset
$$
\Lambda_0=\{(x,y,\sigma(x),\sigma(y))\mid x,y\in\Z[\sqrt2]\}
$$
is a lattice in $\R^4$. Let $\operatorname{SL}(2,\Z[\sqrt2])$ denote the group of $2\times2$ matrices of determinant one and
entries in the ring $\Z[\sqrt2]$. The group $\operatorname{SL}(2,\Z[\sqrt2])$ is clearly dense in $\operatorname{SL}(2,\R)$.

We consider the embedding
$$
\rho:\operatorname{SL}(2,\Z[\sqrt2])\longrightarrow \operatorname{SL}(2,\R)\times \operatorname{SL}(2,\R),\qquad
B\longmapsto\diag(B,\sigma(B)).
$$
It follows that $\rho(\operatorname{SL}(2,\Z[\sqrt2]))\subset \operatorname{SL}(4,\R)$ is discrete and preserves the lattice $\Lambda_0$
(actually $\rho(\operatorname{SL}(2,\Z[\sqrt2]))$ is a lattice in $\operatorname{SL}(2,\R)\times \operatorname{SL}(2,\R)$, i.e of finite co-volume, see e.g. \cite[Sect. 2.9.C2]{GromovPansu}).

Now take any $B$ such that $0<\tr(B)<2$ and $2<\tr(\sigma(B))=\sigma(\tr(B))$. Then, $A=\diag(B_1,B_2)$ preserves $\Lambda_0$. The rotation $B$ is irrational. Hence, $\Gamma=\Z\ltimes_A\Lambda$ is an irreducible lattice of $G$.

\subsubsection{Properties of \texorpdfstring{$\Gamma$}{Gamma}}
\begin{lemma}\label{lem:W}
Let $\Gamma=\Z\ltimes_A\Lambda$ be an irreducible lattice as above. 
Then $\Lambda$ intersects $P_2+P_4\subset V_1\oplus V_2$ trivially.
\end{lemma}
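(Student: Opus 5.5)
Here $P_2+P_4$ inside $V_1\oplus V_2$ means the subgroup $\exp(\Span(f_2,f_3,e_2))=V_1\oplus \R e_2$, a hyperplane of $\R^4$. The plan is to show that a nonzero $v\in\Lambda$ in this hyperplane would force a forbidden invariant sublattice. The whole argument rests on the arithmetic of $A$, which is conjugate to an element of $\operatorname{SL}(4,\Z)$ preserving $\Lambda$.

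First, record the spectral data of $A$ in the concrete construction. We have $A=\rho(B)=\diag(B,\sigma(B))$, where $B$ is elliptic with irrational rotation angle and $\sigma(B)$ is hyperbolic with eigenvalues $\lambda^{\pm1}$, $\lambda>1$. The characteristic polynomial of $A$ on $\Lambda\otimes\Q$ is
$$
\chi(t)=(t^2-\tr(B)t+1)(t^2-\sigma(\tr B)t+1).
$$
It has rational coefficients, and I claim it is irreducible over $\Q$. Its roots are $e^{\pm i\theta}$ (with $\theta/\pi$ irrational) and $\lambda^{\pm1}$. A rational factor of degree $1$ would be $t\pm1$, which is impossible. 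A rational factor of degree $2$ containing $\lambda$ would have to be $t^2-\sigma(\tr B)t+1$ itself, since $\lambda$ is real and cannot pair with a complex root. That forces $\tr B\in\Q$, hence $\tr B=\sigma(\tr B)$, contradicting $0<\tr B<2<\sigma(\tr B)$. A factor of degree $3$ would leave a rational linear factor, already excluded. So $\chi$ is irreducible.

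Next, suppose $0\neq v\in\Lambda\cap(V_1\oplus\R e_2)$. The key observation is that $V_1\oplus\R e_2$ is $A$-invariant: $A$ preserves $V_1$ and $e_2$ is an eigenvector of $B_2$ (eigenvalue $\lambda^{-1}$). Then $v, Av, A^2v, A^3v$ all lie in this $3$-dimensional space, so they are linearly dependent over $\R$. The span of $\{A^kv\}$ over $\Q$ is an $A$-invariant rational subspace $W\subset\Lambda\otimes\Q$. Its real span lies in the hyperplane, so $\dim_\Q W\le 3$. Since $v\neq0$, $W$ is nonzero, so $\chi_A|_W$ is a proper rational factor of $\chi$. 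This contradicts irreducibility. The same argument works symmetrically for the hyperplane $V_1\oplus\R e_1$.

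The main obstacle is the rational dependence step. Real linear dependence of $v,\dots,A^3v$ does not immediately give a rational one, so I would phrase it through the $\Q$-span $W$. Since $\Lambda$ is a lattice, $\dim_\Q W$ equals the real dimension of $W\otimes\R$, which is at most $3$. A second point to check is that the $\Lambda$ produced by the construction is $\Lambda_0$, and that $A$ is integral on it, so that $\chi$ really is the characteristic polynomial of a rational operator. Both follow from $\rho(B)$ preserving $\Lambda_0$ as stated.
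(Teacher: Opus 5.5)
Your proof is correct, but it takes a different route from the paper's.

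The paper argues dynamically. On the $A$-invariant hyperplane $V_1\oplus\R e_2$, $A$ rotates $V_1$ irrationally and contracts $\R e_2$, so the forward orbit of a lattice point there is bounded. By discreteness of $\Lambda$ the orbit is finite, so $A^k\gamma=\gamma$ for some $k\geq 1$. Since $A^k$ has no eigenvalue $1$ on that hyperplane, $\gamma=0$. This uses only the shape of $A$ and discreteness, so it applies verbatim to every irreducible lattice in the sense of the definition, with no arithmetic input.

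You instead use $\Q$-irreducibility of $\chi$ to rule out nonzero proper $A$-invariant rational subspaces, and observe that a lattice point in an invariant real subspace generates one. This gives a stronger conclusion: $\Lambda$ meets every proper $A$-invariant real subspace trivially. That includes $V_2$, where generic orbits are unbounded in both time directions and the boundedness argument gives nothing. So you also obtain $\Lambda\cap V_2=\{0\}$, the $\Sol$ half of the remark characterizing irreducible lattices.

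The price is that you must prove irreducibility. As written, you do so only for $A=\rho(B)$ on $\Lambda_0$, via $\sigma$, whereas the lemma is stated for any irreducible lattice. The repair is easy and uses only the irrationality of the rotation. $\chi\in\Z[t]$ has no root $\pm1$, and any splitting into two rational quadratics must separate $\{\lambda^{\pm1}\}$ from $\{e^{\pm i\theta}\}$. By Gauss's lemma, $t^2-2\cos\theta\,t+1\in\Z[t]$, so $2\cos\theta\in\{-1,0,1\}$. That means the rotation has finite order, a contradiction.
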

\begin{proof}
    Let $\gamma\in (P_2+ P_4)\cap \Lambda$ and recall that $P_2+ P_4=\exp(\operatorname{span}(f_2,f_3,e_2))$. 
    Since the action of $A$ on the direction $\exp(\R e_2)$ is contracting (up to forward or backward iteration) and its action on $\R f_2\oplus \R f_3$ is by an irrational rotation, we deduce that the orbit of $\gamma$ is bounded and therefore finite. 
    Hence $\gamma$ must be zero.
\end{proof}
\begin{corollary}\label{cor:all-conjugates}
All conjugates of $P_i$, for $1\leq i\leq4$, intersect $\Gamma$ trivially.
\end{corollary}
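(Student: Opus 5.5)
Since $\Gamma$ is a subgroup, it suffices to show $g^{-1}\Gamma g\cap P_i=\{e\}$ for every $g\in G$ and every $i$. The plan is to push everything into the normal subgroup $V_1\oplus V_2$ and then apply Lemma~\ref{lem:W}.

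\textbf{Step 1: $\Gamma\cap gP_ig^{-1}$ lies in $\Lambda$.} Let $\pi:G\to \operatorname{SO}(2)\times\R$ be the projection with kernel $V_1\oplus V_2$. Since the target is abelian, $\pi(gP_ig^{-1})=\pi(P_i)$.
\begin{itemize}
\item For $i=2,4$ we have $P_i\subset V_1\oplus V_2$. Hence $gP_ig^{-1}\subset V_1\oplus V_2$, and therefore $\gamma\in\Lambda$.
\item For $i=1,3$, $\pi(P_i)=\exp(\R h)$ has trivial $\operatorname{SO}(2)$ component. A nontrivial $\gamma=(a_0^n,v)$ with $n\neq0$ has $\pi(\gamma)=a_0^n$, whose $\operatorname{SO}(2)$ part is an irrational rotation $B_1^n\neq \mathrm{id}$. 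This is impossible, so again $n=0$ and $\gamma\in\Lambda$.
\end{itemize}

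\textbf{Step 2: conjugates of the $P_i$ inside $V_1\oplus V_2$ lie in $P_2+P_4$.} We need $gP_ig^{-1}\cap(V_1\oplus V_2)\subset P_2+P_4=\exp(\Span(f_2,f_3,e_2))$.

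Write $g=sv$ with $s\in \operatorname{SO}(2)\times\R$ and $v\in V_1\oplus V_2$. Conjugation by $s$ acts linearly and preserves both $V_1$ and $\exp(\R e_2)$, hence preserves $P_2+P_4$. So we may take $g=v=(v_1,v_2)$.

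\emph{Cases $i=2,4$.} Here $P_i$ lies in the abelian group $V_1\oplus V_2$, so $vP_iv^{-1}=P_i$ and the claim is immediate.

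\emph{Cases $i=1,3$.} An element $\exp(tf_j+sh)$ with $s\neq0$ projects nontrivially under $\pi$, and so does every conjugate of it. Thus an element of $vP_iv^{-1}$ lies in $V_1\oplus V_2$ only if $s=0$, i.e. it is $v\exp(tf_j)v^{-1}=\exp(tf_j)\in V_1$.

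In all cases the intersection lies in $V_1\subset P_2+P_4$. This is the only computation, and it rests on the fact that the $\R$-factor $h$ acts only on $V_2$ while $V_1\oplus V_2$ is abelian.

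\textbf{Step 3: conclusion.} By Steps 1 and 2, any $\gamma\in\Gamma\cap gP_ig^{-1}$ lies in $\Lambda\cap(P_2+P_4)$, which is trivial by Lemma~\ref{lem:W}.

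The main obstacle is Step 1 for $i=1,3$. There one must use that a nonzero power of $a_0$ has a nontrivial rotation part, which is exactly where the irrationality of $B_1$ enters. Combining the conclusion with Lemma~\ref{lem:product-closing} then yields Theorem~\ref{thm:main}.
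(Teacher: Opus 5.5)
Your proof is correct and follows the paper's route. The irrational rotation part of $a_0$ forces the relevant element of $\Gamma$ into $\Lambda$; the paper detects this through the adjoint action on $V_1$, you through the $\operatorname{SO}(2)$-component of $\pi$. The conjugate then lies in $P_2+P_4$, and Lemma~\ref{lem:W} finishes. Your Step~2 makes explicit what the paper leaves implicit, namely that conjugates of $P_2,P_4$, and the parts of conjugates of $P_1,P_3$ inside $V_1\oplus V_2$, lie in $P_2+P_4$. One small slip: for $i=2,4$ that intersection is $P_i\subset P_2+P_4$, not a subset of $V_1$ as your summary sentence says.
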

\begin{proof}
Lemma \ref{lem:W} shows this for $P_2$ and $P_4$. 
Suppose that $\gamma\in P_3=\exp(\operatorname{span}(f_3,h))$ is conjugate to an element of $\Gamma$.
That is, $\gamma=\exp(tf_3+sh)=\exp(tf_3)\exp(sh)$ for some real numbers $t,s\in\R$. 
It follows that the adjoint action of $\gamma$ on $V_1$ is trivial. 
This forces $\gamma$ to be in $\Lambda$ since the adjoint action of any element of $\Gamma$ on $V_1$ is either trivial or via an irrational rotation. 
This shows that $s=0$, which means that $\gamma=\exp(tf_3)$, which is impossible by Lemma \ref{lem:W}.
The same holds for $P_1$. 
\end{proof}

\subsubsection{A compact quotient}
Combining Lemma~\ref{lem:product-closing} with Corollary~\ref{cor:all-conjugates}, we obtain Theorem~\ref{thm:main},
more precisely, 
\begin{corollary}
Endow the compact quotient $M=\Gamma\backslash G$ with the metric induced by the left-invariant
metric on $G$ generated by $q$. Then $M$ is a geodesically complete manifold of signature $(2,4)$ without
closed geodesics.
\end{corollary}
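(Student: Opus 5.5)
The corollary follows from three ingredients already in place: compactness and completeness of $M$, its signature, and the reduction of closed geodesics to an intersection problem between conjugates of $\Gamma$ and the planes $P_i$. I will check each in turn.

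\emph{Compactness.} I first check that $\Gamma=\Z\ltimes_A\Lambda$ is cocompact in $G=(\operatorname{SO}(2)\times\R)\ltimes(V_1\oplus V_2)$. The element $a_0$ generating the $\Z$-factor has non-vanishing $\R$-component. Hence $\langle a_0\rangle$ is cocompact in $\operatorname{SO}(2)\times\R$, since its image in $\R$ is a lattice and $\operatorname{SO}(2)$ is compact. Also, $\Lambda$ is a lattice in $V_1\oplus V_2\simeq\R^4$ preserved by $A=\mathrm{Ad}(a_0)|_{V_1\oplus V_2}$. So $\Gamma\backslash G$ fibers over the compact space $\langle a_0\rangle\backslash(\operatorname{SO}(2)\times\R)$ with fibers the tori $\Lambda\backslash\R^4$, and is therefore compact.

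\emph{Completeness and signature.} The signature is $(2,4)$ because $q_1$ and $q_2$ are Lorentzian on $3$-dimensional algebras. For completeness, $q=q_1\oplus q_2$ splits the Euler--Arnold field as $V_q=V_{q_1}\times V_{q_2}$, and both factors are complete by Guediri's theorem and \cite{CF-partII}. Thus $V_q$ is complete, and by Remark~\ref{rem:complete} the metric on $G$ is complete. Geodesic completeness then passes to $M$, because $G\to M$ is a local isometry and geodesics in $M$ lift to geodesics in $G$.

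\emph{No closed geodesics.} By Lemma~\ref{lem:product-closing}, a closed geodesic exists if and only if some nontrivial $\gamma\in\Gamma$ lies in some $gP_ig^{-1}$; equivalently, $g^{-1}\gamma g\in P_i\setminus\{e\}$. Corollary~\ref{cor:all-conjugates} excludes exactly this for every $i$, so $M$ has no closed geodesics.

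The one delicate point is the identification $\mathcal H(q)=\bigcup_i P_i\setminus\{e\}$, which Lemma~\ref{lem:product-closing} relies on. For this I would verify that a periodic solution of the product system projects to periodic solutions of $\mathcal E_1$ and $\mathcal E_2$. Each projection must then be constant, since neither factor system has nonconstant periodic orbits. It remains to determine the singular sets. From $\mathcal E_1$, the singular set is $\{x=0,\ yz=0\}$, i.e.\ $\R f_2\cup\R f_3$; from $\mathcal E_2$, it is $\R h\cup\R e_2$. Their products give the four planes $\mathfrak p_i$. Finally, the holonomy of a constant solution $w\in\mathfrak p_i$ over period $T$ is $\exp(Tw)$, and these elements sweep out exactly $P_i\setminus\{e\}$ as $w$ ranges over $\mathfrak p_i\setminus\{0\}$ and $T$ over positive reals.
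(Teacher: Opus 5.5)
Your proposal is correct and follows the paper's own argument. The paper gets completeness from the splitting of the product Euler--Arnold field, then concludes directly by combining Lemma~\ref{lem:product-closing} with Corollary~\ref{cor:all-conjugates}. Your extra verifications only make explicit what the paper asserts in the surrounding text: cocompactness of $\Z\ltimes_A\Lambda$, the signature count, and the identification $\mathcal H(q)=\bigcup_i P_i\setminus\{e\}$. That last step silently uses that $\exp$ is injective on each $\mathfrak p_i$, i.e.\ $P_i\cong\R^2$, which the paper notes.
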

\begin{proof}
By Corollary~\ref{cor:all-conjugates},
no conjugate of any $P_i$ contains a non-trivial element of $\Gamma$. Lemma~\ref{lem:product-closing} finishes the proof.
\end{proof}

\section*{AI declaration}
AI tools were used only for writing assistance. All mathematical content was developed and verified by the authors.

\bigskip

\section*{Acknowledgments}
The second named author acknowledges the support of CMAT (Centro de Matemática da Universidade do Minho). His research was financed by Portuguese Funds through FCT (Fundação para a Ciência e a Tecnologia, I.P.) within the Project UID/00013/2025 and also within the doctoral grant UI/BD/154255/2022.
\printbibliography

\end{document}